\newcommand{\f}{\frac}
 \newtheorem{thm}{Theorem}[section]
\newtheorem{cor}[thm]{Corollary}
 \newtheorem{lem}[thm]{Lemma}
 \newtheorem{prop}[thm]{Proposition}
\theoremstyle{definition}
\theoremstyle{remark}
 \theoremstyle{problem}
 \numberwithin{equation}{section}
\begin{document}

\title[Characterizing nilpotent Lie algebras ]
 {Characterizing nilpotent Lie algebras rely on the dimension of their $2$-nilpotent multipliers }

\author[F. Johari]{Farangis Johari}
\author[P. Niroomand]{Peyman Niroomand}

\email{e-mail:farangis.johari@mail.um.ac.ir,farangisjohary@yahoo.com}
\address{Department of Pure Mathematics\\
Ferdowsi University of Mashhad, Mashhad, Iran}

\address{School of Mathematics and Computer Science\\
Damghan University, Damghan, Iran}
\email{niroomand@du.ac.ir, p$\_$niroomand@yahoo.com}

\thanks{\textit{Mathematics Subject Classification 2010.} Primary 17B30; Secondary 17B05, 17B99.}

\keywords{$2$-nilpotent multiplier, isoclinism}

\date{\today}


\begin{abstract}There are some results on nilpotent Lie algebras $ L $ investigate the structure  of $ L $ rely on the study of its $2$-nilpotent multiplier. It is showed that the dimension of  the $2$-nilpotent multiplier of $ L $ is equal to $  \frac{1}{3} n(n-2)(n-1)+3-s_2(L).$ Characterizing the structure of all nilpotent Lie algebras has been obtained for the case $ s_2(L)=0.$ This paper is devoted to the characterization of all nilpotent Lie algebras when $ 0\leq s_2(L)\leq 6.$ Moreover, we show that which of them are $2$-capable.
\end{abstract}

\maketitle
\section{Introduction}
For an $n$-dimensional nilpotent non-abelian Lie algebra $ L,$ it is well-know that the dimension of its Schur multiplier is equal to $ \dfrac{1}{2}(n-1)(n-2)+1-s(L) $ for some $ s(L)\geq 0,$ by a result of \cite[Theorem 3.1]{ni}. There are several papers devoted to investigation of the structure of an $n$-dimensional nilpotent non-abelian Lie algebra $ L$ rely on $s(L).$ The structure of all nilpotent non-abelian Lie algebras $ L $ is obtain when $ s(L)=0,1,2,3$ in \cite{ni,ni2,sa}. These results not only characterize a nilpotent Lie algebra in terms of $ s(L) $ but also they can help to shorten the processes of finding the structure of a nilpotent Lie algebra $ L $ in terms of 
$ t(L)=\dfrac{1}{2}n(n-1)-\dim \mathcal{M}(L) $ (see \cite{ba1,ni2}). 

Let $ L$ be a Lie algebra presented as the quotient of a free Lie algebra $ F$ by an ideal $ R.$ Then the $2$-nilpotent multiplier of $ L, \mathcal{M}^{(2)}(L), $ is isomorphic to $ \dfrac{R\cap F^3}{[R,F,F]}.$ It is a less extent the $c$-nilpotent multiplier $ \mathcal{M}^{(c)}(L)  $ for $ c=2 $ (see \cite{ni20}).\\ The study of the $2$-nilpotent multiplier of Lie algebras can lead to the classification of algebras Lie algebra into the equivalence classes as in the group theory case (see \cite{el}).
It also gives a criterion for detecting  the $2$-capability of Lie algebras.
Recall that a Lie algebra $L$ is said to be $2$-capable provided that $L\cong H/Z_2(H)$ for a Lie algebra $H$.
 
In \cite{ni20}, the second author showed that the dimension of  the $2$-nilpotent multiplier of an $n$-dimensional  non-abelian nilpotent Lie algebra $ L $ with the derived subalgebra of dimension $ m $ is bounded by $  \frac{1}{3} (n-m)
\big{(}(n+2m-2)(n-m-1)+3(m-1)
\big{)}+3.$ Then $\dim \mathcal{M}^{(2)}(L)\leq \frac{1}{3} n(n-2)(n-1)+3  $ and so we have $ \dim \mathcal{M}^{(2)}(L) = \frac{1}{3} n(n-2)(n-1)+3-s_2(L)$ for some $s_2(L)\geq 0.$ The structure of all non-abelian   nilpotent Lie algebras is obtained when $s_2(L)=0$ in \cite{ni20}. The current paper is devoted to obtain the structure of all nilpotent non-abelian Lie algebras $ L $ when $1\leq s_2(L)\leq 6.$ Moreover, we specify which of them are capable.
\section{Preliminaries}
Following to Shirshov in \cite{shi}, for a free Lie algebra $L$ on the set $X=\{x_1,x_2,\ldots \}.$
The  basic commutator on the set $X$ defined inductively.
\begin{itemize}
\item[$(i)$] The generators $x_1,x_2,\ldots, x_n$ are basic commutators of length one and ordered by setting $x_i < x_j$ if $i < j.$
\item[$(ii)$] If all the basic commutators $d_i$ of length less than $t$ have been defined and ordered, then we may define the basic commutators of length $t$ to be all commutators of the form $[d_i, d_j]$ such that the sum of lengths of $d_i$ and $d_j$ is $t,$ $d_i > d_j,$ and if $d_i =[d_s, d_t],$ then $d_j\geq d_t.$ The basic commutators of length $t$ follow those of lengths less than $t.$ The basic commutators of the same length can be ordered in any way, but usually the lexicographical order is used.
\end{itemize}
   The number of all basic commutators on a set $X=\{x_1,x_2,\ldots, x_d\}$ of length $n$ is denoted by $l_d(n)$. Thanks to \cite{2},  we have
   \[l_d(n)=\frac{1}{n}\sum_{m|n}\mu (m)d^{\f{n}{m}},\]
   where $\mu (m)$ is the M\"{o}bius function, defined by $\mu (1) = 1, \mu (k) = 0$ if $k$ is divisible by a square, and
$\mu (p_1 \ldots p_s) = (-1)^s $ if $p_1,\ldots , p_s$ are distinct prime numbers.
Using the the topside statement and looking  \cite[Lemma 1.1]{sal} and \cite{shi}, we have the following.\newline

\begin{thm}\label{13}
Let $ F $ be a free Lie algebra on set $ X,$ then $ F^c/ F^{c+i}$ is an abelian Lie algebra with the basis of all basic commutators on $ X $ of lengths $ c,c+1,\ldots,c+i-1 $ for all $0 \leq i \leq c$. In particular, $ F^c/ F^{c+1}$ is an abelian Lie algebra of dimension $l_d(c),$ where $ F^{c} $ is the $c$-th term of the lower central series of $F$.
\end{thm}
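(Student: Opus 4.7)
The plan is to reduce the statement to Shirshov's Hall-basis theorem together with a short observation about $[F^c,F^c]$. I would break the argument into four short steps.

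First, I would verify that $F^c/F^{c+i}$ is abelian. Since $[F^c,F^c]\subseteq F^{2c}$ by the definition of the lower central series, and $2c\geq c+i$ precisely because $i\leq c$, one has $[F^c,F^c]\subseteq F^{c+i}$, so the quotient is abelian as claimed. This is the only place where the hypothesis $i\leq c$ is used and it is immediate, so I would dispatch it in a single line.

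Next I would prove the spanning property. By the Shirshov/Hall basis theorem (the version recalled in \cite[Lemma 1.1]{sal} and in \cite{shi}), the set of all basic commutators on $X$ is a $K$-basis of the free Lie algebra $F$, and the basic commutators of length $\geq c$ span $F^c$. Reducing modulo $F^{c+i}$ kills every basic commutator of length $\geq c+i$, so $F^c/F^{c+i}$ is spanned by (the images of) the basic commutators of lengths $c,c+1,\dots,c+i-1$.

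For linear independence I would use exactly the same basis theorem in the other direction. Any linear relation among the chosen basic commutators in $F^c/F^{c+i}$ lifts to an equality in $F$ of the form
\[
\sum \alpha_j\,d_j \;=\; \sum \beta_k\,e_k,
\]
where the $d_j$ run through basic commutators of lengths in $\{c,\dots,c+i-1\}$ and the $e_k$ run through basic commutators of length $\geq c+i$. Since the full family of basic commutators is $K$-linearly independent in $F$, all $\alpha_j$ vanish. This is where the heavy lifting really sits, but it is entirely contained in the cited results, so in the write-up it amounts to a one-sentence appeal.

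Finally, the special case $i=1$ follows by counting: the basis of $F^c/F^{c+1}$ consists of the basic commutators of length exactly $c$ on the alphabet $X=\{x_1,\dots,x_d\}$, and their number is $l_d(c)$ by Witt's formula as recorded just before the statement from \cite{2}. The main obstacle is conceptual rather than computational: one must be comfortable that Shirshov's theorem supplies both the spanning and independence of basic commutators; once this is granted, the proof is essentially bookkeeping with the filtration $\{F^k\}$.
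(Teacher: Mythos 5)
Your argument is correct and follows the same route the paper takes: the paper states Theorem \ref{13} as an immediate consequence of Shirshov's basis theorem (via \cite{shi} and \cite[Lemma 1.1]{sal}) together with the Witt formula, offering no further detail. Your write-up simply makes explicit the steps the paper leaves implicit (the containment $[F^c,F^c]\subseteq F^{2c}\subseteq F^{c+i}$ for abelianness, and spanning plus independence from the basic-commutator basis), and all of these steps are sound.
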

The following theorem improves the result of \cite[Theorem 2.5]{ara} for $ c=2 $ when $ L $ is a non-abelian nilpotent Lie algebra.
\begin{thm}\cite[Theorem 2.14]{ni20}\label{1}
Let $  L$ be an $n$-dimensional nilpotent Lie algebra with the derived subalgebra of dimension $m~ (m \geq 1).$ Then
$\dim \mathcal{M}^{(2)}(L) \leq \frac{1}{3} (n-m)
\big{(}(n+2m-2)(n-m-1)+3(m-1)
\big{)}+3.$ If $ m=1, $ then $ \dim \mathcal{M}^{(2)}(L)=  \frac{1}{3}n(n-1)(n-2)+3 $ if and only if  $ L\cong H(1)\oplus A(n-3). $
\end{thm}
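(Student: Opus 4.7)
The plan is to prove both the inequality and the equality case via the Hopf-type formula $\mathcal{M}^{(2)}(L)\cong (R\cap F^3)/[R,F,F]$ for a free presentation $L=F/R$, using Theorem~\ref{13} to count basic commutators. First I would choose a convenient presentation: let $F$ be the free Lie algebra on $n$ generators $x_1,\ldots,x_n$ and $R$ an ideal so that $\bar x_1,\ldots,\bar x_{n-m}$ descend to a basis of a vector-space complement of $L^2$ in $L$, while $\bar x_{n-m+1},\ldots,\bar x_n$ descend to a basis of $L^2$. Because $\bar x_k\in L^2$ for $k>n-m$, each such $x_k$ equals, modulo $R$, a fixed element $w_k\in F^2$ that is a linear combination of commutators in $x_1,\ldots,x_{n-m}$. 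Thus $R$ contains the elements $x_k-w_k$ ($k>n-m$), and bracketing these with arbitrary elements of $F$ twice produces explicit members of $[R,F,F]$ that kill many basic commutators of length $\geq 3$.

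Next, for the upper bound I would use the identification
\[
\dim \mathcal{M}^{(2)}(L)=\dim \bigl(F^3/[R,F,F]\bigr)-\dim L^3,
\]
coming from the short exact sequence $0\to (R\cap F^3)/[R,F,F]\to F^3/[R,F,F]\to L^3\to 0$. By Theorem~\ref{13}, $F^3/F^5$ has the basic commutators of lengths $3$ and $4$ as a basis, and $F^{c+3}\subseteq[R,F,F]$ where $c$ is the nilpotency class of $L$, so only finitely many basic commutators matter. Splitting these basic commutators according to their multidegree in $\{x_{n-m+1},\ldots,x_n\}$ versus $\{x_1,\ldots,x_{n-m}\}$, and using the substitutions $x_k\equiv w_k\pmod R$ to replace any basic commutator containing some $x_k$ by longer Lie words in the $x_i$ ($i\le n-m$), I would arrive at a count that simplifies to $\tfrac{1}{3}(n-m)\bigl((n+2m-2)(n-m-1)+3(m-1)\bigr)+3$.

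For the equality part with $m=1$: the classification of non-abelian nilpotent Lie algebras with $\dim L^2=1$ yields $L\cong H(k)\oplus A(n-2k-1)$ for some $k\ge 1$, where $H(k)$ denotes the $(2k+1)$-dimensional Heisenberg algebra. A direct computation of $\mathcal{M}^{(2)}$ on each $H(k)\oplus A(n-2k-1)$, or a K\"unneth-type formula for $\mathcal{M}^{(2)}$ of a direct sum combined with known values of $\mathcal{M}^{(2)}(H(k))$ and $\mathcal{M}^{(2)}(A(t))$, shows that $\dim\mathcal{M}^{(2)}(H(k)\oplus A(n-2k-1))$ is strictly decreasing in $k$. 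Hence the equality $\tfrac{1}{3}n(n-1)(n-2)+3$ forces $k=1$, i.e.\ $L\cong H(1)\oplus A(n-3)$, and the converse direction is obtained by computing the multiplier of $H(1)\oplus A(n-3)$ explicitly.

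The main obstacle is the middle step: carefully tracking which basic commutators of lengths $3$ and $4$ survive after reducing modulo $[R,F,F]$, and matching the resulting dimension with the precise combinatorial formula (including the subtle $+3(m-1)$ term and the additive $+3$). The multidegree bookkeeping must absorb several Jacobi-identity reductions while avoiding overcounting, and this is where the bulk of the combinatorial effort resides.
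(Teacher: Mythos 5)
The paper does not actually prove this statement: Theorem \ref{1} is imported verbatim from \cite[Theorem 2.14]{ni20} and used as a black box, so there is no internal proof to compare yours against. Judged on its own merits, your proposal has a genuine gap. The entire quantitative content of the theorem --- the bound $\frac{1}{3}(n-m)\bigl((n+2m-2)(n-m-1)+3(m-1)\bigr)+3$ --- is supposed to emerge from the ``multidegree bookkeeping'' step, which you explicitly defer as ``the main obstacle.'' Nothing in the proposal indicates where the $3(m-1)$ term or the additive $+3$ would come from, and the counting frame is set up incorrectly for general $L$: you work in $F^3/F^5$ (basic commutators of lengths $3$ and $4$ only), but $F^{c+3}\subseteq[R,F,F]$ only guarantees that commutators up to length $c+2$ matter, so for nilpotency class $c>2$ your list of surviving basic commutators is incomplete. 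Moreover, after rewriting $\dim\mathcal{M}^{(2)}(L)=\dim\bigl(F^3/[R,F,F]\bigr)-\dim L^3$, the target inequality depends on $\dim L^3$, which varies with $L$; your sketch never accounts for this. In short, what you have is a plausible framework with the theorem-bearing step left blank.

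It is also worth noting that the route consistent with how the rest of this paper operates (Lemmas \ref{2} and \ref{3}, Theorems \ref{51} and \ref{519f}) is not direct commutator counting but induction on $m=\dim L^2$: one passes to a one-dimensional central ideal $B\subseteq L^2$ and applies the inequality $\dim\mathcal{M}^{(2)}(L)+\dim(L^3\cap B)\leq\dim\mathcal{M}^{(2)}(L/B)+\dim(L/L^2\otimes L/L^2\otimes B)$ of \cite[Theorem 2.4]{ara}, with the base case $m=1$ supplied by the classification in Theorem \ref{4}. Free-presentation counting of the kind you propose is how one handles the base case (abelian and Heisenberg algebras, as in Proposition \ref{m2}), but extracting the $m$-dependence of the general bound by hand is far harder than absorbing it through the inductive inequality. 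Finally, a small factual slip in your equality argument: by Theorem \ref{4}, $\dim\mathcal{M}^{(2)}(H(k)\oplus A(n-2k-1))$ is \emph{not} strictly decreasing in $k$; it equals $\frac{1}{3}n(n-1)(n-2)+3$ for $k=1$ and the constant value $\frac{1}{3}n(n-1)(n-2)$ for every $k\geq 2$. The conclusion that equality forces $k=1$ still stands, but the monotonicity claim as stated is false.
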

\section{main results}
This section is devoted to obtain new result on the dimension of the  $ 2$-nilpotent multiplier of a non-abelian nilpotent Lie algebra. We are going to obtain the structure of all Lie algebras $ L $ such that $1\leq s_2(L)\leq 6. $\\
We need the following two easy lemmas for the next investigation.
\begin{lem}\label{2} Let $  L$ be an $n$-dimensional nilpotent Lie algebra with the derived subalgebra of dimension $m~ (m \geq 3).$ Then
$\dim \mathcal{M}^{(2)}(L) \leq \frac{1}{3} n
(n-2)(n-1)-2.$
\end{lem}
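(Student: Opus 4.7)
The plan is to invoke Theorem~\ref{1} to obtain the general bound
\[
\dim \mathcal{M}^{(2)}(L) \leq f(m) := \tfrac{1}{3}(n-m)\bigl((n+2m-2)(n-m-1) + 3(m-1)\bigr) + 3,
\]
and then to verify the purely numerical inequality $f(m) \leq \tfrac{1}{3}n(n-2)(n-1) - 2$ for every $m \geq 3$. The only non-algebraic ingredient I need is the elementary observation that $n \geq m + 2$: since $L$ is nilpotent and non-abelian, $\dim L/L^{2}$ cannot equal $1$, because by the standard fact that a subset generating $L/L^{2}$ generates $L$ whenever $L$ is nilpotent, a single-generator quotient would force $L$ to be abelian, contradicting $m \geq 3$.

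The remainder is a direct expansion. Forming the difference $\tfrac{1}{3}n(n-2)(n-1) - 2 - f(m)$ and collecting terms, the plan is to rewrite it in the factored form
\[
n(m-1)^2 - \tfrac{m(m-1)(2m-1)}{3} - 5,
\]
which is linear and strictly increasing in $n$. Substituting the smallest admissible value $n = m + 2$ and clearing the factor of $3$, the desired inequality reduces to the scalar inequality $(m-1)(m^{2} + 4m - 6) \geq 15$. At $m = 3$ the left-hand side equals $30$, and it is strictly increasing in $m$ for $m \geq 3$, so the bound holds throughout the range, completing the proof.

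The only place where a mistake could easily slip in is the polynomial bookkeeping when expanding $(n-m)\bigl((n+2m-2)(n-m-1) + 3(m-1)\bigr)$ in the two variables $n, m$; the key point to watch for is that after subtraction the leading $n^{3}$ and $n^{2}$ contributions cancel those of $n(n-1)(n-2)$, and what remains factors cleanly through the identity $3n(m-1)^{2} - m(m-1)(2m-1)$ used above. Once this factorization is in hand the rest is trivial.
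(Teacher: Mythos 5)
Your proof is correct and follows essentially the same route as the paper: both invoke Theorem \ref{1} and reduce the statement to a purely polynomial inequality in $n$ and $m$. The only difference is in the bookkeeping of that inequality --- the paper first replaces $m$ by $3$ (tacitly using that the bound of Theorem \ref{1} is decreasing in $m$ for $3\leq m\leq n-1$) and then estimates in $n$, whereas you keep $m$ general, justify $n\geq m+2$, and minimize the linear-in-$n$ difference at $n=m+2$; your factorization $n(m-1)^2-\tfrac{1}{3}m(m-1)(2m-1)-5$ checks out, so your version is, if anything, slightly more fully justified.
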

\begin{proof}
By using Theorem \ref{1} and our assumption, we have
\begin{align*}
&\dim \mathcal{M}^{(2)}(L) \leq \frac{1}{3} (n-m)
\big{(}(n+2m-2)(n-m-1)+3(m-1) \big{)}+3 \leq \\& \frac{1}{3} (n-3)
\big{(}(n+4)(n-4)+3(3-1) \big{)}+3\\& =\frac{1}{3} (n-3)
\big{(}(n+4)(n-4)+6 \big{)}+3= \frac{1}{3} (n^3-3n^2)-\frac{10n}{3}+10+3-2+2\\&=\frac{1}{3} (n^3-3n^2)-5(\frac{2n}{3}-3)-2\leq  \frac{1}{3} n(n-2)(n-1)-2.
\end{align*}
The result is obtained.
\end{proof} 
\begin{lem}\label{3} Let $  L$ be an $n$-dimensional nilpotent Lie algebra with the derived subalgebra of dimension $2.$ Then
$\dim \mathcal{M}^{(2)}(L) \leq \frac{1}{3} n
(n-2)(n-1)+1.$
\end{lem}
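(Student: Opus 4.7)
The plan is to follow exactly the strategy used in the proof of Lemma \ref{2}: specialize Theorem \ref{1} to the case $m = 2$ and reduce the claim to a short algebraic comparison. The first thing I would note is that the hypothesis $\dim L' = 2$ forces $n \geq 4$. Indeed, if $n \leq 3$, then $\dim L/L' \leq 1$, so $L$ would be generated by a single element modulo $L'$; but any Lie algebra is generated by any set whose image spans $L/L'$, so $L$ would be cyclic and hence abelian, contradicting $\dim L' = 2$.

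Next I would substitute $m = 2$ into the bound of Theorem \ref{1} and simplify:
\[
\dim \mathcal{M}^{(2)}(L) \leq \tfrac{1}{3}(n-2)\bigl((n+2)(n-3) + 3\bigr) + 3 = \tfrac{1}{3}(n-2)(n^2 - n - 3) + 3.
\]
Writing the target bound as $\tfrac{1}{3}n(n-2)(n-1) + 1 = \tfrac{1}{3}(n-2)(n^2 - n) + 1$, the difference between target and the Theorem \ref{1} bound is
\[
\tfrac{1}{3}(n-2)(n^2 - n) + 1 - \tfrac{1}{3}(n-2)(n^2 - n - 3) - 3 = (n-2) - 2 = n - 4,
\]
which is nonnegative precisely when $n \geq 4$. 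Combined with the preceding paragraph, this yields the claimed inequality.

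I do not expect any substantive obstacle: the whole argument is essentially one polynomial simplification plus the elementary observation that $\dim L' = 2$ rules out $n \leq 3$. The only mildly delicate feature is that the comparison is tight at $n = 4$ (both sides equal $9$ there), so the lemma cannot be sharpened uniformly, and one should be careful to retain the $+1$ rather than a smaller constant.
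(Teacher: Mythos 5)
Your proof is correct and follows essentially the same route as the paper: substitute $m=2$ into Theorem \ref{1} and compare the resulting polynomial with $\tfrac{1}{3}n(n-2)(n-1)+1$, the difference being $n-4\geq 0$. The only (welcome) refinement is that you make explicit the step $\dim L^2=2\Rightarrow n\geq 4$, which the paper's computation uses implicitly.
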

\begin{proof}
By invoking Theorem \ref{1}, we have
\begin{align*}
&\dim \mathcal{M}^{(2)}(L) \leq \frac{1}{3} (n-2)
\big{(}(n+2)(n-3)+3 \big{)}+3 =   \frac{1}{3} (n-2)(n^2-n-3)+3\\& = \frac{1}{3} (n^3-3n^2)-(\frac{n}{3}-5)\leq \frac{1}{3} (n^3-3n^2)+\frac{2n}{3}+1= \frac{1}{3} n(n-2)(n-1)+1,
\end{align*}
as required.
\end{proof}
\begin{thm}\cite[Theorem 2.13]{ni20}\label{4}
Let $  L$ be an $n$-dimensional nilpotent Lie algebra and $\dim L^2=1.$ Then $ L\cong H(k)\oplus A(n-2k-1) $ and 
\begin{itemize}
\item[$  (i)$]  $   \mathcal{M}^{(2)}(L)\cong A(\frac{1}{3}n(n-1)(n-2)+3),$ if $ k=1. $
\item[$  (ii)$] $ \mathcal{M}^{(2)}(L)\cong A(\frac{1}{3}n(n-1)(n-2)),$ for all $ k\geq 2 $.
\end{itemize}
\end{thm}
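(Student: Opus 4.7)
The plan is to recover the structural decomposition from the hypothesis $\dim L^2 = 1$, and then compute the multiplier via a free presentation and basic commutators.

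First, since $L$ is nilpotent with $\dim L^2 = 1$, the derived subalgebra lies in $Z(L)$, so $L$ has nilpotency class $2$; the Lie bracket then descends to a non-degenerate alternating form on $L/Z(L)$ with values in the one-dimensional space $L^2$. Hence $\dim L/Z(L) = 2k$ is even, and a symplectic basis $\bar x_1, \bar y_1, \ldots, \bar x_k, \bar y_k$, lifted and combined with a vector-space complement of $L^2$ inside $Z(L)$, exhibits $L \cong H(k) \oplus A(n-2k-1)$.

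For the multiplier, write $L = F/R$ with $F$ free of rank $n$ on generators $x_1, y_1, \ldots, x_k, y_k, z, w_1, \ldots, w_{n-2k-1}$ matching the chosen basis. Class $2$ forces $F^3 \subseteq R$, so $R \cap F^3 = F^3$ and
\[
\mathcal{M}^{(2)}(L) \;=\; F^3 / [R, F, F].
\]
Because $[F^3, F, F] = F^5 \subseteq [R, F, F]$, the computation lives inside the finite-dimensional space $F^3/F^5$, which by Theorem \ref{13} has a basic-commutator basis of dimension $l_n(3) + l_n(4)$. The ideal $R$ modulo $F^3$ is spanned by the $k$ Heisenberg relations $[x_i, y_i] - z$ together with all length-$2$ basic commutators among the remaining generator pairs (those vanishing in $L$). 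Bracketing each such relation with two more generators and applying the Jacobi identity gives a complete description of $([R, F, F] + F^5)/F^5$ as a subspace of $F^3/F^5$.

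A direct dimension count, split into the length-$3$ and length-$4$ graded pieces, then yields the two formulas. The key point distinguishing the two cases is that for $k \geq 2$ the additional identifications $[x_i, y_i] \equiv [x_j, y_j] \pmod{R}$ (with $i \neq j$) contribute exactly three further independent elements to $[R, F, F]$ that are absent when $k = 1$; this accounts precisely for the missing $+3$ in case (ii). The abelian structure of $\mathcal{M}^{(2)}(L)$ is automatic since the quotient $F^3/[R,F,F]$ is by construction a section of $F^3/F^5$ annihilated by every bracket from $F$. The main obstacle is the graded bookkeeping in the last step, namely checking that Jacobi fully detects the linear dependencies and that the surplus three dimensions for $k \geq 2$ come exactly from the pairwise identifications of the $[x_i, y_i]$; this is handled most cleanly by choosing a basic-commutator basis of $F^3/F^5$ adapted to the symplectic pairs and matching Jacobi relations term by term.
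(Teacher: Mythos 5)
First, a point of reference: the paper does not prove this statement at all --- it is quoted verbatim from \cite[Theorem 2.13]{ni20}, so there is no internal proof to compare against. The closest the paper comes is Proposition \ref{m2}, where computations of the same flavour are carried out for specific small algebras, and there the authors use a \emph{minimal} free presentation (free on $2$ or $3$ generators), which keeps the relator set small and essentially homogeneous, rather than a presentation on all $n$ basis elements as you do.

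Your first paragraph (class $2$, nondegenerate alternating form on $L/Z(L)$, hence $L\cong H(k)\oplus A(n-2k-1)$) is correct and standard, and your reduction of $\mathcal{M}^{(2)}(L)$ to $F^3/[R,F,F]$ with $F^5\subseteq [R,F,F]$ is also sound. The gap is that everything after that is description rather than proof: the entire quantitative content of the theorem is the dimension count of $([R,F,F]+F^5)/F^5$ inside $F^3/F^5$, and you assert its outcome (``a direct dimension count \ldots then yields the two formulas'') without performing it. Two specific problems. (1) With your choice of generators the relators $[x_i,y_i]-z$ are \emph{inhomogeneous} (degree $1$ plus degree $2$), so $([R,F,F]+F^5)/F^5$ is not a graded subspace of $F^3/F^5$ and ``splitting into the length-$3$ and length-$4$ graded pieces'' does not literally make sense; you must pass to an associated graded or argue through the filtration, and this is exactly where the bookkeeping lives. (2) The claim that for $k\geq 2$ the identifications $[x_i,y_i]\equiv[x_j,y_j]$ contribute ``exactly three further independent elements'' to $[R,F,F]$, uniformly in $k$ and $n$, is precisely the nontrivial assertion of the theorem and is stated, not verified; indeed the ``$+3$'' in case $(i)$ is more naturally seen as $l_2(4)=3$, the surviving degree-$4$ basic commutators of the two-generator Heisenberg algebra (for $H(1)$ one has $R=F^3$ and $\mathcal{M}^{(2)}(H(1))=F^3/F^5$ of dimension $l_2(3)+l_2(4)=5$), so your heuristic for where the $3$ comes from is at best unsubstantiated and at worst misattributed. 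To close the gap you would either have to carry out the count explicitly (as Proposition \ref{m2} does for its examples), or compute $\mathcal{M}^{(2)}(H(k))$ from a minimal presentation and then invoke a direct-sum formula for $\mathcal{M}^{(2)}$ of $H(k)\oplus A(n-2k-1)$ in the spirit of \cite{sa4} and \cite[Theorem 2.5]{ni20}.
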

\begin{cor}\label{25}
There is no  $n$-dimensional nilpotent Lie algebra $ L $ with the derived subalgebra of dimension $ m\geq 1 $ such that
$\dim \mathcal{M}^{(2)}(L)=\frac{1}{3} n
(n-2)(n-1)+2$ or equally $s_2(L)=1.$
\end{cor}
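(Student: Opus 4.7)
The plan is to rule out $s_2(L)=1$ by a clean case analysis on $m = \dim L^2$, using the three results immediately preceding the corollary. Since $L$ is non-abelian, $m \geq 1$, and the three prior results cover exactly the three regimes $m=1$, $m=2$, and $m \geq 3$.

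First I would handle the easy upper-bound cases. If $m \geq 3$, Lemma \ref{2} gives $\dim \mathcal{M}^{(2)}(L) \leq \frac{1}{3}n(n-2)(n-1) - 2$, which is strictly less than $\frac{1}{3}n(n-2)(n-1) + 2$, so $s_2(L) = 1$ is impossible. If $m = 2$, Lemma \ref{3} gives $\dim \mathcal{M}^{(2)}(L) \leq \frac{1}{3}n(n-2)(n-1) + 1$, again strictly below the target dimension, so this case is ruled out as well.

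The only remaining case is $m=1$, which is where Theorem \ref{4} does the work: it forces $L \cong H(k) \oplus A(n-2k-1)$ and computes the dimension of $\mathcal{M}^{(2)}(L)$ exactly in each subcase. For $k=1$ one gets $\frac{1}{3}n(n-1)(n-2) + 3$, hence $s_2(L) = 0$; for $k \geq 2$ one gets $\frac{1}{3}n(n-1)(n-2)$, hence $s_2(L) = 3$. Neither value equals $\frac{1}{3}n(n-2)(n-1) + 2$, so no algebra with $m=1$ realizes $s_2(L)=1$ either.

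Combining the three cases completes the proof. There is no real obstacle here: all the analytic work has already been done in Lemmas \ref{2} and \ref{3} and in Theorem \ref{4}. The only thing to check is that the arithmetic inequalities are \emph{strict}, which they are by a margin of at least $1$, so no boundary case arises. The corollary is therefore essentially a bookkeeping consequence of the preceding three results.
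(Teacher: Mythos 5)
Your proposal is correct and is exactly the paper's argument: the paper's own proof simply cites Lemma \ref{2}, Lemma \ref{3} and Theorem \ref{4}, which correspond precisely to your three cases $m\geq 3$, $m=2$ and $m=1$. You have merely spelled out the bookkeeping that the paper leaves implicit.
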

\begin{proof}
The result follows from Lemmas \ref{2}, \ref{3} and Theorem \ref{4}. 
\end{proof}
By using the notation and terminology of \cite{cic,Gr}, we have
\begin{prop}\label{m2}
The $ 2$-nilpotent multiplier of the Lie algebras 
\[L_{4,3}=\langle x_1, x_2, x_3, x_4\big{|}[x_1, x_2] = x_3, [x_1, x_3] = x_4\rangle,\]
\[L_{5,8}=\langle x_1, x_2, x_3, x_4,x_5\big{|}[x_1, x_2] = x_4, [x_1, x_3] = x_5\rangle\] and
\[L_{5,5}=\langle x_1, x_2, x_3, x_4,x_5\big{|}[x_1, x_2] = x_3, [x_1, x_3] = x_5=[x_2,x_4]\rangle\]
 is abelian of dimension $6,$  $18$ and $17,$ respectively.
\end{prop}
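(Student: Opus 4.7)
The plan is to invoke the Hopf-type description
$\mathcal{M}^{(2)}(L) \cong (R\cap F^3)/[R,F,F]$
for a free presentation $L = F/R$, and then to carry out the computation explicitly in each case using the basic-commutator basis supplied by Theorem \ref{13}.

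For each algebra I would choose $F$ to be the free Lie algebra on a minimal generating set: two generators $y_1, y_2$ for $L_{4,3}$ and three generators $y_1, y_2, y_3$ for $L_{5,8}$ and $L_{5,5}$ (since $\dim L/L^2$ equals $2,\ 3,\ 3$ respectively). The ideal $R$ is then generated by $F^{c+1}$, where $c$ is the nilpotency class of $L$, together with the extra relations read off from the defining brackets and the vanishing of all non-specified commutators: for $L_{4,3}$, $R$ is generated by $[y_2,[y_1,y_2]]$ together with $F^4$; for $L_{5,8}$, $R$ is generated by $[y_2,y_3]$ together with $F^3$; and for $L_{5,5}$, $R$ is generated by $[y_1,y_3]$, $[y_2,[y_1,y_2]]$ and $[y_2,y_3]-[y_1,[y_1,y_2]]$, together with $F^4$.

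With $R$ in hand, one computes $R\cap F^3$ and $[R,F,F]$ inside a suitable truncation of $F$. For $L_{5,8}$ the computation is quick: $R\cap F^3 = F^3$, $[R,F,F] = [[y_2,y_3],F,F] + F^5$, and Theorem \ref{13} gives $\dim F^3/F^5 = l_3(3)+l_3(4) = 8 + 18 = 26$. The only linear dependence among the nine elements $[[y_2,y_3],y_k,y_l]$ arises from the Jacobi identity $[[[y_2,y_3],y_2],y_3] - [[[y_2,y_3],y_3],y_2] = [[y_2,y_3],[y_2,y_3]] = 0$, so $[R,F,F]$ modulo $F^5$ has dimension $8$, yielding dimension $26-8 = 18$ for the multiplier. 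For $L_{4,3}$ one has $R\cap F^3 = R$ and $[R,F,F]\subseteq F^5$; modulo $F^6$, the numerator has dimension $1 + l_2(4) + l_2(5) = 10$, and after expressing the four iterated brackets $[c_2, y_i, y_j]$ (with $c_2 = [[y_2,y_1],y_2]$) via Jacobi in the basic-commutator basis of $F^5/F^6$, they turn out to be linearly independent, giving denominator dimension $4$ and the claimed dimension $6$. The same scheme applied to $L_{5,5}$ yields $17$. That each multiplier is abelian follows from the general observation $[R\cap F^3, R\cap F^3]\subseteq [F^3,F^3]\subseteq F^6\subseteq [R,F,F]$, valid in every case because $R$ contains either $F^3$ or $F^4$.

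The main obstacle is Jacobi bookkeeping: one must identify every linear dependence among the iterated brackets $[r,f,g]$ in the chosen basic-commutator basis without double-counting. This is most delicate for $L_{5,5}$, where the generator $[y_2,y_3]-[y_1,[y_1,y_2]]$ has inhomogeneous degree, so that extracting $R\cap F^3$ requires bracketing with the generators and carefully tracking the resulting length-$3$ and length-$4$ terms modulo $F^4$ to avoid miscounting the length-$3$ content.
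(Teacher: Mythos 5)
Your proposal follows essentially the same route as the paper's proof: the Hopf-type formula $\mathcal{M}^{(2)}(L)\cong (R\cap F^3)/[R,F,F]$ over a free presentation on a minimal generating set, dimension counts via Theorem \ref{13} and the numbers $l_d(n)$, and Jacobi bookkeeping to find the dimension of $[R,F,F]$ modulo the relevant term of the lower central series, with your counts for $L_{4,3}$ ($10-4=6$) and $L_{5,8}$ ($26-8=18$) matching the paper's exactly. The only differences are that you make the abelianness explicit via $[R\cap F^3,R\cap F^3]\subseteq F^6\subseteq[R,F,F]$ (left implicit in the paper) and that you only sketch the $L_{5,5}$ count while correctly flagging its main difficulty (the inhomogeneous relator), which is acceptable for a plan and no less detailed than the paper's own rather compressed treatment of that case.
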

\begin{proof}
Let  $ L\cong L_{4,3}$ and  $F$ be a free Lie algebra on the set
$ \lbrace x_1, x_2\rbrace $ and  $ R=\langle [x_1,x_2,x_2]\rangle+F^4.$
Since $L_{4,3}$ is of class $ 3, $   $ F^4\subseteq R $ and so \[
\mathcal{M}^{(2)}(L_{4,3}) \cong \dfrac{ \langle [x_1,x_2,x_2]\rangle+F^4/F^6}{[\langle [x_1,x_2,x_2]\rangle+F^4,F,F]/F^6}.\]
 Theorem \ref{13} implies  $\dim F^4/F^6=l_2(4)+l_2(5)=3+6=9.  $
 It is easy to see that $ [R,F,F]/F^6=[\langle [x_1, x_2,x_2] \rangle,F,F]+F^6/F^6= \langle [x_1,x_2,x_2,x_1,x_1]+F^6,[x_1,x_2,x_2,x_2,x_1]$  $+F^6,[x_1,x_2,x_2,x_2,x_2]+F^6,[x_1,x_2,x_2,[x_1,x_2]]+F^6\rangle$
  and so
$\dim [R,F,F]/F^6=4.$
 It follows $\dim \mathcal{M}^{(2)}(L_{4,3})=10-4=6.$\\ Now, let $ L\cong L_{5,8}.$ 
Clearly, $ L= \langle x_1, x_2, x_3\big{|}[x_2, x_3]=[x_i,x_j,x_k]=0, 1\leq i,j,k\leq 3\rangle.$ 
Now assume that $F$ is a free Lie algebra on the set
$ \lbrace x_1, x_2, x_3\rbrace $ and  $ R=\langle [x_2, x_3] \rangle+F^3.$
Since $L_{5,8}$ is of class two,   $ F^3\subseteq R $ and so $
\mathcal{M}^{(2)}(L_{5,8}) \cong \dfrac{ F^3/F^5}{[R,F,F]/F^5}.$ \newline
 Theorem \ref{13} implies  $\dim F^3/F^5=l_3(3)+l_3(4)=8+18=26.$
 It is easy to see that $ [R,F,F]/F^5= \langle [x_2,x_3,x_1,x_1]+F^5,[x_2,x_3,x_1,x_2]+F^5,[x_2,x_3,x_1,x_3]+F^5, [x_2,x_3,x_2,x_1]+F^5,  [x_2,x_3,x_2,x_2]+F^5,[x_2,x_3,x_2,x_3]+F^5,[x_2,x_3,x_3,x_1]+F^5,
 [x_2,x_3,x_3,x_2]+F^5,[x_2,x_3,x_3,x_3]+F^5\rangle.$ 
 Use of  jacobi identity on all triples and make some calculations, we obtain that
 \begin{align*}
 &[x_2,x_3,x_1,x_2]=-[x_1,x_2,[x_2,x_3]]+[x_2,x_3,x_2,x_1],\\
 &[x_2,x_3,x_1,x_3]=-[x_1,x_3,[x_2,x_3]]+[x_2,x_3,x_3,x_1],\\
 &[x_2,x_3,x_2,x_3]=-[x_2,x_3,[x_2,x_3]]+[x_2,x_3,x_3,x_2].
   \end{align*} 
   Therefore
 $[R,F,F]/F^5= \langle [x_2,x_3,x_1,x_1]+F^5,[x_2,x_3,x_2,x_1]+F^5,[x_2,x_3,x_2,x_2]+F^5,[x_2,x_3,x_3,x_1]+F^5, [x_2,x_3,x_3,x_2]+F^5,[x_2,x_3,x_3,x_3]+F^5,[x_1,x_2,[x_1,x_3]]+F^5,[x_1,x_2,[x_2,x_3]]+F^5 \rangle$
  and so
$\dim [R,F,F]/F^5$  $=8.$
 It follows $\dim \mathcal{M}^{(2)}(L_{5,8})=26-8=18.$\\
 Let  $ L\cong L_{5,5}$ and  $F$ be a free Lie algebra on the set
$ \lbrace x_1, x_2,x_4\rbrace $ and 
 $ R=\langle [x_1,x_2,x_2],$  $[x_2,x_4,x_1],[x_2,x_4,x_2],[x_2,x_4,x_4],[x_1,x_4,x_1],[x_1,x_4,x_2],[x_2,x_4,x_4],[x_1,x_4]\rangle+F^4$ so $ R/F^6\cong  F^3+\langle [x_1,x_4]\rangle/\langle [x_1,x_2,x_1]\rangle+F^6.$
Since $L_{5,5}$ is of class $ 3, $   $ F^4\subseteq R $ and so \[
\mathcal{M}^{(2)}(L_{5,5}) \cong \dfrac{ F^3/\langle [x_1,x_2,x_1]\rangle+F^6}{[\langle [x_1,x_4]\rangle,F,F]+\langle [x_1,x_2,x_1]\rangle+F^5/\langle [x_1,x_2,x_1]\rangle+F^6}.\]
 Theorem \ref{13} implies  $\dim F^3/F^6=l_3(3)+l_3(4)+l_3(5)=8+18+l_3(5).  $
 It is easy to see that $ [R,F,F]/F^6= \langle [x_1,x_4,x_1,x_1],[x_1,x_4,x_2,x_1],[x_1,x_4,x_2,x_2],$ \newline $[x_1,x_4,x_4,x_1],[x_1,x_4,x_4,x_2],[x_1,x_4,x_4,x_4],[x_1,x_2,[x_1,x_4]],[x_1,x_4,[x_2,x_4]]\rangle+F^5/F^6$
  and so
$\dim [R,F,F]/F^6=8+l_3(5).$
Therefore $\dim \mathcal{M}^{(2)}(L_{5,5})=l_3(3)+l_3(4)+l_3(5)-1-l_3(5)-8=17,$ as required.
\end{proof}
A Lie algebra $L$ is called capable if $L\cong H/Z(H)$ for a Lie algebra $H$. See \cite{nin} for more information on this topic.
\begin{prop}\label{68}
Let $ L $ be a non-capable $n$-dimensional nilpotent Lie algebra of  class $3$ with the derived subalgebra of dimension $2$  and $n\geq 6.$ Then $\dim \mathcal{M}^{(2)}(L)=\frac{1}{3} (n-1)
(n-2)(n-3)+2.$
\end{prop}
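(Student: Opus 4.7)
The strategy is to use non-capability to pin down the isomorphism type of $L$, and then run a free-Lie-algebra computation in the style of Proposition~\ref{m2}.

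First I would exploit non-capability. A standard criterion (analogous to the Ellis criterion for groups) asserts that a non-capable nilpotent Lie algebra has a non-trivial epicenter, so one can split off a one-dimensional central abelian summand. Iterating, I write $L \cong K \oplus A(n - \dim K)$ with $K$ a non-capable indecomposable algebra. Since $L$ is of class $3$ with $\dim L^2 = 2$ (forcing $\dim L^3 = 1$), inspection of the low-dimensional classification used in Proposition~\ref{m2} identifies the $4$-dimensional filiform algebra $L_{4,3}$ as the unique such $K$, yielding $L \cong L_{4,3} \oplus A(n-4)$. The hypothesis $n \geq 6$ ensures the abelian summand is sufficiently large for this decomposition argument to go through.

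Next I would present $L = F/R$ with $F$ the free Lie algebra on $d = n-2$ generators $y_1, y_2, y_3, \ldots, y_{n-2}$ and
\[ R \;=\; \langle [y_1,y_2,y_2]\rangle \;+\; \sum_{i<j,\ j\geq 3} \langle [y_i,y_j]\rangle \;+\; F^4. \]
Since $L$ has class $3$ we have $F^4 \subseteq R$, and $[F^4, F, F] \subseteq F^6$, so everything may be worked out modulo $F^6$. Theorem~\ref{13} then gives $\dim F^3/F^6 = l_{n-2}(3)+l_{n-2}(4)+l_{n-2}(5)$. The contribution to $(R \cap F^3)/F^6$ beyond $F^4/F^6$ comes from $[y_1, y_2, y_2]$ together with the brackets $[[y_i, y_j], y_k]$ for $j \geq 3$; meanwhile $[R, F, F]/F^6$ is spanned by $[r, y_p, y_q]$ as $r$ ranges over the length-$2$ and length-$3$ relators, after Jacobi reduction exactly as in the $L_{5,8}$ and $L_{5,5}$ calculations of Proposition~\ref{m2}.

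Finally, careful bookkeeping -- using the Jacobi identity to trim linear dependencies among the brackets involving the abelian relators $[y_i, y_j]$ -- should reveal that after all cancellations only $l_{n-2}(3) + 2$ basic commutators survive in the quotient, giving
\[ \dim \mathcal{M}^{(2)}(L) \;=\; l_{n-2}(3) + 2 \;=\; \tfrac{1}{3}(n-1)(n-2)(n-3) + 2 \]
as required. The chief obstacle is the combinatorics of this last step: one must enumerate basic commutators of lengths $3$ through $5$ on $n-2$ generators, identify which lie in $[R,F,F]$, and apply Jacobi systematically to extract a basis. The structural decomposition in the first step is conceptually the delicate point, but should follow by adapting a lemma on capability from the earlier works referenced in the introduction.
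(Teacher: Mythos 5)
There is a genuine gap, and it sits at the very first step. From non-capability you may only conclude that the epicenter $Z^{*}(L)$ is a non-trivial central ideal; this does \emph{not} let you ``split off a one-dimensional central abelian summand,'' and iterating an unavailable splitting does not produce a decomposition $L\cong K\oplus A(n-\dim K)$. A nilpotent Lie algebra of class $3$ with $\dim L^2=2$ need not be a direct sum of a $4$-dimensional filiform algebra and an abelian one, so the identification $L\cong L_{4,3}\oplus A(n-4)$ is unjustified (it is also internally inconsistent: you ask for $K$ non-capable, yet $L_{4,3}\cong L_{5,7}/Z(L_{5,7})$ is capable). Moreover, even granting that structure, the decisive step of your computation --- ``careful bookkeeping \ldots should reveal that only $l_{n-2}(3)+2$ basic commutators survive'' --- is exactly the assertion to be proved and is nowhere established; with $n-2$ generators and the whole family of relators $[y_i,y_j]$, $j\geq 3$, the Jacobi reductions are far from the two small cases worked out in Proposition~\ref{m2}, and nothing in your outline forces the count to come out to $l_{n-2}(3)+2$.

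The paper's proof is structural and much shorter: by the classification of capable Lie algebras with two-dimensional derived subalgebra in \cite{ni60}, for such a non-capable $L$ one has $Z^{*}(L)=L^3\cong A(1)$, hence $L/L^3\cong H(1)\oplus A(n-4)$; Theorem~\ref{4} gives $\dim\mathcal{M}^{(2)}(L/L^3)=\frac{1}{3}(n-1)(n-2)(n-3)+3$, and the comparison of $\mathcal{M}^{(2)}(L)$ with $\mathcal{M}^{(2)}(L/Z^{*}(L))$ from \cite[Lemma 2.2 and Theorem 3.2]{ni20} yields $\dim\mathcal{M}^{(2)}(L)=\dim\mathcal{M}^{(2)}(L/L^3)-1$. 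No determination of the isomorphism type of $L$ itself and no basic-commutator enumeration is required. To repair your argument you would need both a genuine classification of the non-capable algebras in question and a complete execution of the commutator count; each is harder than the route the paper takes.
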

\begin{proof}
By \cite[Lemma 4.5, Corollary 4.11 and  Theorem 5.1]{ni60}, $ Z^{*}(L)=L^3\cong A(1) $ and so $ L/L^3\cong H(1)\oplus A(n-4).$ Since 
$ L $ is not $2$-capable, we have $\dim \mathcal{M}^{(2)}(L)=\dim \mathcal{M}^{(2)}(L/L^3)-1=\frac{1}{3} (n-1)
(n-2)(n-3)+2,$ by using Theorem \ref{4} and \cite[Lemma 2.2 and Theorem 3.2]{ni20}. 
\end{proof}
\begin{lem}\label{5} There is no  $n$-dimensional nilpotent Lie algebra $ L $ with the derived subalgebra of dimension $2$ such that
$\dim \mathcal{M}^{(2)}(L)=\frac{1}{3} n
(n-2)(n-1)+1$ or equally $ s_2(L)=2. $ 
\end{lem}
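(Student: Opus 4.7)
The plan is to sharpen Lemma~\ref{3} by applying Theorem~\ref{1} directly rather than in its weakened form, and then to finish off by hand the single small dimension that survives. Expanding the right-hand side of Theorem~\ref{1} at $m=2$ gives
\[
\dim\mathcal{M}^{(2)}(L)\;\leq\;\tfrac{1}{3}(n-2)\bigl((n+2)(n-3)+3\bigr)+3\;=\;\tfrac{1}{3}n(n-1)(n-2)+(5-n),
\]
so that $s_2(L)\geq n-2$. Hence, for every $n\geq 5$ with $\dim L^2=2$ we already have $s_2(L)\geq 3$, and $s_2(L)=2$ is impossible in this range.

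Since $\dim L^2=2$ forces $\dim L/L^2\geq 2$ (a $1$-generator nilpotent algebra modulo its derived subalgebra has trivial derived subalgebra), we have $n\geq 4$, so only the case $n=4$ remains to be ruled out. A short structural argument shows that a $4$-dimensional nilpotent Lie algebra $L$ with $\dim L^2=2$ must be generated by two elements $x_1,x_2$ (three generators would leave only a one-dimensional derived subalgebra in dimension four), must have nilpotency class $3$ (class $2$ forces $\dim L^2=1$), and that after replacing $x_2$ by a suitable $x_2-\alpha x_1$ one can arrange $[x_2,[x_1,x_2]]=0$. The unique algebra arising in this way is the filiform algebra $L_{4,3}$.

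To conclude, I would invoke Proposition~\ref{m2}, which gives $\dim\mathcal{M}^{(2)}(L_{4,3})=6$. Since $\tfrac{1}{3}\cdot 4\cdot 3\cdot 2+3=11$, this yields $s_2(L_{4,3})=11-6=5\neq 2$, completing the argument.

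The main obstacle is precisely the case $n=4$: at that value the two bounds of Lemma~\ref{3} and Theorem~\ref{1} coincide, so the refinement above leaves $s_2(L)=2$ formally open, and the exclusion rests on the classification observation that $L_{4,3}$ is the only non-abelian $4$-dimensional nilpotent Lie algebra with $\dim L^2=2$, together with the explicit multiplier computation from Proposition~\ref{m2}.
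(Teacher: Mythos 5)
Your argument is correct, and it reaches the conclusion by a route that differs from the paper's in its key reduction step. The paper proves Lemma~\ref{5} by passing to a quotient $L/B$ by a one-dimensional central ideal $B\subseteq L^2$ and invoking the inequality $\dim\mathcal{M}^{(2)}(L)\leq\dim\mathcal{M}^{(2)}(L/B)+\dim(L/L^2\otimes L/L^2\otimes B)$ from Araskhan--Rismanchian together with Theorem~\ref{4}; you instead observe that the proof of Lemma~\ref{3} already yields the sharper bound $\dim\mathcal{M}^{(2)}(L)\leq\frac{1}{3}n(n-1)(n-2)+(5-n)$ directly from Theorem~\ref{1} at $m=2$, i.e.\ $s_2(L)\geq n-2$. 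Amusingly, the paper's quotient argument produces exactly the same numerical bound $\frac{1}{3}(n-1)(n-2)(n-3)+3+(n-2)^2=\frac{1}{3}n(n-1)(n-2)+5-n$, so the two reductions are equivalent in strength here, but yours is more economical: it uses only the already-established Theorem~\ref{1} and dispenses with the central-quotient machinery (which the paper genuinely needs later, in Theorems~\ref{51}, \ref{519f} and \ref{519}, where the direct bound is no longer sharp enough). Both arguments then terminate identically at $n=4$ with $L\cong L_{4,3}$ and the computation $\dim\mathcal{M}^{(2)}(L_{4,3})=6$ from Proposition~\ref{m2}, giving $s_2(L_{4,3})=5\neq 2$; your short structural derivation of the uniqueness of $L_{4,3}$ among $4$-dimensional nilpotent algebras with two-dimensional derived subalgebra replaces the paper's appeal to de Graaf's classification and is sound.
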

\begin{proof}
By contrary, let there be  an $n$-dimensional nilpotent Lie algebra $ L $ with the derived subalgebra of dimension $2$ such that
$\dim \mathcal{M}^{(2)}(L) =\frac{1}{3} n
(n-2)(n-1)+1.$ Let $ B $ be a one dimensional central ideal of $L$ is contained in $L^2.$ Since $ \dim (L/B)^2=1, $ we have $ \dim \mathcal{M}^{(2)}(L/B)\leq  \frac{1}{3}(n-1)(n-2)(n-3)+3$ by using Theorem \ref{4}. Now \cite[Theorem 2.4]{ara} implies that
\begin{align*}
&\frac{1}{3}(n-2)(n^2-n)+1=\frac{1}{3}n(n-1)(n-2)+1=\dim \mathcal{M}^{(2)}(L)\leq \dim \mathcal{M}^{(2)}(L) +\\&\dim L^3\cap B\leq \dim \mathcal{M}^{(2)}(L/B)+\dim (L/L^2\otimes L/L^2 \otimes B)\leq  \\&\frac{1}{3}(n-1)(n-2)(n-3)+3+(n-2)^2=\frac{1}{3}(n-2)(n^2-n-3)+3.
\end{align*}
If $ n\geq 5, $ then we have a contradiction. Hence, we should have $ n=4, $ and so by looking at all nilpotent Lie algebras $ L $ listed in \cite{Gr} and our assumption, we obtain that $ L\cong L_{4,3}. $  By our assumption, since $ s_2(L)=2,$ $ \dim( \mathcal{M}^{(2)}(L_{4,3}) )=\frac{1}{3} 4 (4-2)(4-1)+1=9.$ On the other hand, by using Proposition \ref{m2}, we have $\dim( \mathcal{M}^{(2)}(L_{4,3}) )=6.$ It  is a contradiction again. Hence, the result follows.
\end{proof}
Let $cl(L)$ be used to denote the nilpotency class of a Lie algebra $L.$
\begin{thm}\label{51} 
There is no  $n$-dimensional nilpotent Lie algebra $ L $ with the derived subalgebra of dimension $2$   such that
$\dim \mathcal{M}^{(2)}(L)=\frac{1}{3} n
(n-2)(n-1)$ or equally $ s_2(L)=3.$ 
\end{thm}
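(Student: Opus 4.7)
The plan is to follow the strategy of Lemma \ref{5}, with the new target dimension $\dim \mathcal{M}^{(2)}(L) = \tfrac{1}{3}n(n-1)(n-2)$. Suppose for contradiction that such an $n$-dimensional $L$ exists. Since $L$ is nilpotent and $\dim L^2 = 2$, its last nonzero lower-central term lies in $L^2$ and is central, so one can pick a one-dimensional central ideal $B \subseteq L^2$ (take $B = L^3$ when $L$ is of class $3$, or any line in $L^2$ when $L$ is of class $2$). Then $\dim(L/B)^2 = 1$, so Theorem \ref{4} applies to give
\[
\dim \mathcal{M}^{(2)}(L/B) \leq \tfrac{1}{3}(n-1)(n-2)(n-3) + 3.
\]

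Next I apply \cite[Theorem 2.4]{ara} in the same way as in the proof of Lemma \ref{5}:
\begin{align*}
\tfrac{1}{3}n(n-1)(n-2) &= \dim \mathcal{M}^{(2)}(L) \leq \dim \mathcal{M}^{(2)}(L/B) + \dim\bigl(L/L^2 \otimes L/L^2 \otimes B\bigr) \\
&\leq \tfrac{1}{3}(n-1)(n-2)(n-3) + 3 + (n-2)^2.
\end{align*}
Subtracting $\tfrac{1}{3}(n-1)(n-2)(n-3)$ from both sides collapses the inequality to $(n-2) \leq 3$, i.e.\ $n \leq 5$. Thus only $n \in \{4,5\}$ remains.

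For $n = 4$, the classification in \cite{Gr} forces $L \cong L_{4,3}$ (the unique $4$-dimensional nilpotent Lie algebra with $\dim L^2 = 2$); Proposition \ref{m2} computes $\dim \mathcal{M}^{(2)}(L_{4,3}) = 6$, whereas the hypothesis demands $\tfrac{1}{3}\cdot 4\cdot 3\cdot 2 = 8$, a contradiction. For $n = 5$ the hypothesis requires $\dim \mathcal{M}^{(2)}(L) = 20$. I will step through the list in \cite{Gr} of $5$-dimensional nilpotent Lie algebras with $2$-dimensional derived subalgebra: Proposition \ref{m2} already eliminates $L_{5,5}$ (value $17$) and $L_{5,8}$ (value $18$), and each remaining algebra will be handled by the same free-presentation/basic-commutator/Jacobi-reduction scheme used in Proposition \ref{m2}, verifying each time that $\dim \mathcal{M}^{(2)}(L) \neq 20$.

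The main obstacle is the $n = 5$ enumeration: it is a finite but case-heavy pass through \cite{Gr}, requiring for each remaining algebra an explicit computation of the relevant $[R,F,F]/F^5$ (or $F^6$) modulo the appropriate relations. These calculations are routine but tedious extensions of those in Proposition \ref{m2}.
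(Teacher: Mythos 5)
Your proposal follows the same overall strategy as the paper's proof (quotient by a one-dimensional central ideal $B\subseteq L^2$, apply Theorem \ref{4} to $L/B$, bound via \cite[Theorem 2.4]{ara}, then finish by classification), and your arithmetic giving $n\le 5$ is correct. The one substantive difference is that you drop the nonnegative term $\dim(L^3\cap B)$ from the left-hand side of the inequality, whereas the paper keeps it and splits on the nilpotency class: for $cl(L)=3$ one has $B=L^3$ and $\dim(L^3\cap B)=1$, which sharpens the bound to $n\le 4$ in that case. That refinement is what lets the paper get away with checking only $L_{4,3}$ (class $3$, $n=4$) and $L_{5,8}$ (class $2$, $n=5$), both already computed in Proposition \ref{m2}. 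With your uniform bound $n\le 5$ there are \emph{three} five-dimensional nilpotent algebras with two-dimensional derived subalgebra to rule out, namely $L_{5,5}$, $L_{5,8}$, and $L_{5,3}\cong L_{4,3}\oplus A(1)$; you dispatch the first two via Proposition \ref{m2} but leave the third inside the phrase ``each remaining algebra will be handled by the same scheme,'' which is the only genuine loose end. It closes immediately without any new free-presentation computation: by \cite[Theorem 2.5]{ni20} (the direct-sum formula, used exactly this way in the paper's proof of Theorem \ref{519f}) one gets $\dim\mathcal{M}^{(2)}(L_{4,3}\oplus A(1))=12\neq 20$. With that one line added, your argument is complete; alternatively, retaining the $-\dim(L^3\cap B)$ term and the class dichotomy avoids the extra case altogether.
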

\begin{proof}
By contrary, let there be  an $n$-dimensional nilpotent Lie algebra $ L $ with the derived subalgebra of dimension $2$ such that
$\dim \mathcal{M}^{(2)}(L) =\frac{1}{3} n
(n-2)(n-1).$ Let $ B $ be a one dimensional central ideal of $L$ is contained in $L^2.$ Since $ \dim (L/B)^2=1, $ we have $ \dim \mathcal{M}^{(2)}(L/B)\leq  \frac{1}{3}(n-1)(n-2)(n-3)+3,$ by using Theorem \ref{4}. Now \cite[Theorem 2.4]{ara} implies 
\begin{align*}
&\frac{1}{3}(n-2)(n^2-n)=\frac{1}{3}n(n-1)(n-2)=\dim \mathcal{M}^{(2)}(L)\leq \\&\dim \mathcal{M}^{(2)}(L/B)+\dim (L/L^2\otimes L/L^2 \otimes B)-\dim L^3\cap B\leq  \\&\frac{1}{3}(n-1)(n-2)(n-3)+3+(n-2)^2-\dim L^3\cap B\\&=\frac{1}{3}(n-2)(n^2-n-3)+3-\dim L^3\cap B.
\end{align*}
If $ cl(L)=2, $ then $ L^3=0 $ so $ n\leq 5.$ If $ cl(L)=3, $ then since $B= L^2\cap Z(L)=L^3\cong A(1), $ we have $ n\leq 4.$
Let  $ cl(L)=2. $  Hence, our assumption and  looking at the classification of all nilpotent Lie algebras listed in \cite{Gr} show that $ L\cong L_{5,8}.$
By Proposition \ref{m2}, we have $\dim( \mathcal{M}^{(2)}(L_{5,8} ))=  18.$  It contradicts  our  assumption that
$\dim (\mathcal{M}^{(2)}(L_{5,8}))=20.$ 
 Now, let $ cl(L)=3. $
By a similar way, we have $ L\cong L_{4,3}.$ Using Proposition \ref{m2}, we have $\dim( \mathcal{M}^{(2)}(L_{4,3} ))=6.$ It contradicts our  assumption that
  $\dim (\mathcal{M}^{(2)}(L_{4,3} ))=8.$  Hence, the supposition is false and the statement is true.
\end{proof}
\begin{thm}\label{519f} 
There is no  $n$-dimensional nilpotent Lie algebra $ L $ with the derived subalgebra of dimension $m=2$   such that
$\dim \mathcal{M}^{(2)}(L)=\frac{1}{3} n
(n-2)(n-1)-1$ or equally $ s_2(L)=4.$
\end{thm}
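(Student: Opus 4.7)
The plan is to mimic the strategy employed in Lemma \ref{5} and Theorem \ref{51}. Assume for contradiction that such an $L$ exists and pick a one-dimensional central ideal $B$ of $L$ contained in $L^2$. Since $\dim(L/B)^2 = 1$, Theorem \ref{4} yields $\dim \mathcal{M}^{(2)}(L/B) \leq \frac{1}{3}(n-1)(n-2)(n-3)+3$. Feeding this into the bound of \cite[Theorem 2.4]{ara},
\begin{align*}
\dim \mathcal{M}^{(2)}(L) + \dim(L^3 \cap B) &\leq \dim \mathcal{M}^{(2)}(L/B) + \dim(L/L^2 \otimes L/L^2 \otimes B),
\end{align*}
together with $\dim(L/L^2 \otimes L/L^2 \otimes B) = (n-2)^2$ and the hypothesis $\dim \mathcal{M}^{(2)}(L) = \frac{1}{3}n(n-1)(n-2) - 1$, a direct simplification collapses to the numerical inequality $n \leq 6 - \dim(L^3 \cap B)$.

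Splitting by nilpotency class is the next step. If $cl(L) = 2$ then $L^3 = 0$, so $n \leq 6$; if $cl(L) = 3$ then, reasoning as in Theorem \ref{51}, $B = L^2 \cap Z(L) = L^3 \cong A(1)$, so $n \leq 5$. In each of these subcases only finitely many candidates remain, and they are accessible through Gr's classification in \cite{Gr}. For $cl(L) = 3$ the survivors are $L_{4,3}$ and $L_{5,5}$; Proposition \ref{m2} gives $\dim \mathcal{M}^{(2)}(L_{4,3}) = 6$ and $\dim \mathcal{M}^{(2)}(L_{5,5}) = 17$, neither of which matches the values $7$ and $19$ forced by $s_2(L)=4$. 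For $cl(L) = 2$ and $n = 5$, the only candidate with two-dimensional derived subalgebra is $L_{5,8}$, whose multiplier has dimension $18 \neq 19$ by Proposition \ref{m2}.

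The principal obstacle is the case $cl(L) = 2$, $m = 2$, $n = 6$, where several algebras from Gr's list (for example the direct sum $L_{5,8} \oplus A(1)$ together with the six-dimensional class-$2$ families $L_{6,k}$ having two-dimensional derived subalgebra) must be eliminated individually. For each such $L$ one would compute $\dim \mathcal{M}^{(2)}(L)$ by the free-presentation technique of Proposition \ref{m2}: write $L \cong F/R$ with $F$ free on a generating set of size $\dim L/L^2 = 4$, use Theorem \ref{13} to evaluate $\dim(F^3/F^5) = l_4(3) + l_4(4)$, enumerate a spanning set of $[R,F,F]/F^5$, and reduce it via the Jacobi identity exactly as in the proof of Proposition \ref{m2}. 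Checking that none of these algebras attains $\dim \mathcal{M}^{(2)}(L) = \frac{1}{3}(6)(5)(4) - 1 = 39$ delivers the contradiction and closes the proof.
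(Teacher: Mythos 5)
Your setup, the inequality $n\leq 6-\dim(L^3\cap B)$, and the split by nilpotency class all match the paper's argument, and your eliminations for $cl(L)=3$ and for $cl(L)=2$, $n=5$ are essentially correct (though your class-$3$ list omits the candidate $L_{4,3}\oplus A(1)$, which also has derived subalgebra of dimension $2$ and class $3$; it is dispatched since $\dim\mathcal{M}^{(2)}(L_{4,3}\oplus A(1))=12\neq 19$). The genuine gap is that you stop at exactly the point where the work lies: the case $cl(L)=2$, $n=6$ is only described as a plan (``one would compute \dots''), with no candidate actually eliminated. A proof cannot defer its decisive case to an unexecuted computation, especially one as laborious as a free presentation on $4$ generators, where $\dim(F^3/F^5)=l_4(3)+l_4(4)=20+60=80$ and the spanning set of $[R,F,F]/F^5$ is large.

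The paper closes this case without any such computation. The $6$-dimensional class-$2$ candidates with $\dim L^2=2$ are $L_{5,8}\oplus A(1)$, $L_{6,22}(\epsilon)$ and $L_{6,7}^{(2)}(\eta)$. For $L_{5,8}\oplus A(1)$ one uses the direct-sum formula of \cite[Theorem 2.5]{ni20} together with $\dim\mathcal{M}^{(2)}(L_{5,8})=18$ to get $30\neq 39$. For $L_{6,22}(\epsilon)$ and $L_{6,7}^{(2)}(\eta)$ one applies the \emph{same} quotient inequality a second time: choosing $B$ with $L/B\cong H(2)$, Theorem \ref{4} gives $\dim\mathcal{M}^{(2)}(H(2))=20$, so $\dim\mathcal{M}^{(2)}(L)\leq 20+4^2=36<39$. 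You should replace your free-presentation plan with these soft bounds (or actually carry the computations out); as written, the proposal does not constitute a complete proof.
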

\begin{proof}
By contrary, let there be  an $n$-dimensional nilpotent Lie algebra $ L $ with the derived subalgebra of dimension $2$ such that
$\dim \mathcal{M}^{(2)}(L) =\frac{1}{3} n
(n-2)(n-1)-1$ and $ B $ be a one dimensional central ideal of $L$ is contained in $L^2.$ Since $ \dim (L/B)^2=1, $ we have $ \dim \mathcal{M}^{(2)}(L/B)\leq  \frac{1}{3}(n-1)(n-2)(n-3)+3,$ by using Theorem \ref{4}. Now \cite[Theorem 2.4]{ara} implies 
\begin{align*}
&\frac{1}{3}(n-2)(n^2-n)-1=\frac{1}{3}n(n-1)(n-2)-1=\dim \mathcal{M}^{(2)}(L)\leq \\& \dim \mathcal{M}^{(2)}(L/B)+\dim (L/L^2\otimes L/L^2 \otimes B)-\dim L^3\cap B\leq  \\&\frac{1}{3}(n-1)(n-2)(n-3)+3+(n-2)^2-\dim L^3\cap B\\&=\frac{1}{3}(n-2)(n^2-n-3)+3-\dim L^3\cap B.
\end{align*}
If $ cl(L)=2, $ then $ L^3=0 $ so $ n\leq 6.$ If $ cl(L)=3, $ then since $B= L^2\cap Z(L)=L^3\cong A(1), $  $ n\leq 5.$
Let  $ cl(L)=2. $ Hence, our assumption and  looking at the classification of all nilpotent  Lie algebras listed in  \cite{cic,Gr}, we obtain $ L\cong L_{5,8}, L\cong L_{5,8}\oplus A(1), L\cong L_{6,22}(\epsilon)$ or $ L\cong L_{6,7}^{(2)}(\eta). $
By Proposition \ref{m2} and \cite[Theorem 2.5]{ni20}, we have  $\dim( \mathcal{M}^{(2)}(L_{5,8} ))=  18$  and $\dim( \mathcal{M}^{(2)}(L_{5,8} \oplus A(1)))=30.$ It contradicts our assumption. Now, let $ L\cong L_{6,22}(\epsilon)  $
and
$ B $ be a one dimensional central ideal of $L_{6,22}(\epsilon)$ is contained in $L_{6,22}(\epsilon)^2.$ Since $ \dim (L_{6,22}(\epsilon)/B)^2=1 $ and $ L_{6,22}(\epsilon)/B\cong H(2), $ we have $ \dim \mathcal{M}^{(2)}(H(2))=  20,$ by using Theorem \ref{4}. Now \cite[Theorem 2.4]{ara} implies 
$\dim \mathcal{M}^{(2)}(L_{6,22}(\epsilon))\leq \dim \mathcal{M}^{(2)}(H(2))+\dim (H(2)/H(2)^2\otimes H(2)/H(2)^2 \otimes B)= 20+16=36.
$ Similarly, we have  $\dim \mathcal{M}^{(2)}(L_{6,7}^{(2)}(\eta))\leq 36. $ They contradict our assumption that $ \dim \mathcal{M}^{(2)}(L_{6,22}(\epsilon))=39= \dim \mathcal{M}^{(2)}(L_{6,7}^{(2)}(\eta)).$
 Now let $ cl(L)=3. $
 Hence, by looking at the classification of all nilpotent Lie algebras listed in  \cite{Gr}, we obtain  $ L\cong L_{4,3}, $ $ L\cong L_{4,3} \oplus A(1) $ or $ L\cong L_{5,5}. $ By Proposition \ref{m2} and \cite[Theorem 2.5]{ni20}, $\dim( \mathcal{M}^{(2)}(L_{4,3} ))=6,~ \dim( \mathcal{M}^{(2)}(L_{5,5} ))=  17$  and $\dim( \mathcal{M}^{(2)}(L_{4,3} \oplus A(1)))=12.$
They contradict our assumption that $ s_2(L)=4.$ Hence the result is obtained.
 \end{proof}
\begin{thm}\label{kl89}
Let $  L$ be an $n$-dimensional nilpotent Lie algebra with the derived subalgebra of dimension $m\geq 1.$ Then
\begin{itemize}
\item[$  (i)$]$ \dim \mathcal{M}^{(2)}(L)=  \frac{1}{3}n(n-2)(n-1)+3 $ or equally $ s_2(L)=0$ if and only if  $ L\cong H(1)\oplus A(n-3). $
\item[$  (ii)$]There is no  $n$-dimensional nilpotent Lie algebra $ L $ with the derived subalgebra of dimension $m\geq 1$ such that
$\dim \mathcal{M}^{(2)}(L)=\frac{1}{3} n
(n-2)(n-1)+2$ or equally $ s_2(L)=1.$
\item[$ ( iii)$]There is no  $n$-dimensional nilpotent Lie algebra $ L $ with the derived subalgebra of dimension $m\geq 1$ such that
$\dim \mathcal{M}^{(2)}(L)=\frac{1}{3} n
(n-2)(n-1)+1$ or equally $ s_2(L)=2.$
\item[$ ( iv)$]There is no  $n$-dimensional nilpotent Lie algebra $ L $ with the derived subalgebra of dimension $m\geq 2$ such that
$\dim \mathcal{M}^{(2)}(L)=\frac{1}{3} n
(n-2)(n-1)$ or equally $ s_2(L)=3.$
\item[$ ( v)$]There is no  $n$-dimensional nilpotent Lie algebra $ L $ with the derived subalgebra of dimension $m\geq 1$ such that
$\dim \mathcal{M}^{(2)}(L)=\frac{1}{3} n
(n-2)(n-1)-1$ or equally $ s_2(L)=4.$
\end{itemize}
\end{thm}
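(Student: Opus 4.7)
The plan is to derive each clause of Theorem \ref{kl89} by assembling the bounds already established, using the dimension $m=\dim L^2$ as a case split. The key observation is that Lemmas \ref{2} and \ref{3} together with Theorem \ref{4} force $m$ into a narrow range once $s_2(L)$ is small, after which each remaining case has already been handled by a preceding result.

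First I would fix $L$ with $m\geq 1$, and note the following trichotomy on $m$ that is forced by the small value of $s_2(L)$. Lemma \ref{2} says that if $m\geq 3$ then $\dim \mathcal{M}^{(2)}(L)\leq \tfrac{1}{3}n(n-2)(n-1)-2$, so $s_2(L)\geq 5$; hence for every clause (i)–(v), $m\in\{1,2\}$. Lemma \ref{3} says that if $m=2$ then $\dim \mathcal{M}^{(2)}(L)\leq \tfrac{1}{3}n(n-2)(n-1)+1$, so $s_2(L)\geq 2$; hence for clauses (i) and (ii) we must have $m=1$. Finally, Theorem \ref{4} gives the complete list for $m=1$: either $L\cong H(1)\oplus A(n-3)$, giving $s_2(L)=0$, or $L\cong H(k)\oplus A(n-2k-1)$ with $k\geq 2$, giving $s_2(L)=3$. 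In particular, when $m=1$, $s_2(L)\in\{0,3\}$.

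With this in place, each clause falls out by elimination. For (i), the reduction gives $m=1$, and Theorem \ref{4} identifies $H(1)\oplus A(n-3)$ as the unique Lie algebra with $s_2(L)=0$; the converse is the content of Theorem \ref{4}(i). For (ii), $m=1$ forces $s_2(L)\in\{0,3\}$, leaving no room for $s_2(L)=1$. For (iii), $m\leq 2$, and $m=1$ still gives $s_2(L)\in\{0,3\}$, while Lemma \ref{5} rules out $m=2$. For (iv), since $m\geq 2$ is assumed, and $m\geq 3$ is excluded by Lemma \ref{2}, only $m=2$ remains, which is ruled out by Theorem \ref{51}. For (v), the reduction again gives $m\in\{1,2\}$; the case $m=1$ is excluded because $s_2(L)\in\{0,3\}$, and the case $m=2$ is excluded by Theorem \ref{519f}.

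There is no real obstacle here, since all the deep work has been done in the earlier sections; the only thing to be careful about is making sure the three bounds (Lemma \ref{2}, Lemma \ref{3}, Theorem \ref{4}) interact correctly for each prescribed value of $s_2(L)$, and in particular remembering that the case $m=1$ never yields $s_2(L)\in\{1,2,4\}$ so that clauses (ii), (iii), (v) do not require a separate argument for that value of $m$. Thus the proof is essentially a bookkeeping argument, collecting Theorem \ref{1}, Theorem \ref{4}, Lemma \ref{2}, Lemma \ref{3}, Lemma \ref{5}, Theorem \ref{51} and Theorem \ref{519f} into a single statement.
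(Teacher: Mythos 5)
Your proof is correct and takes essentially the same route as the paper, whose own proof of Theorem \ref{kl89} is just a one-line citation of Theorem \ref{1}, Lemma \ref{2}, Theorem \ref{4}, Corollary \ref{25}, Lemma \ref{5} and Theorems \ref{51}, \ref{519f}. You have merely made explicit the routine case analysis on $m=\dim L^2$ (using Lemmas \ref{2} and \ref{3} to force $m\in\{1,2\}$ and Theorem \ref{4} to show $m=1$ yields only $s_2(L)\in\{0,3\}$) that the paper leaves to the reader.
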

\begin{proof}
The result follows  from  Theorem \ref{1}, Lemma \ref{2}, Theorem \ref{4}, Corollary \ref{25}, Lemma \ref{5}, Theorems \ref{51} and \ref{519f}.
\end{proof}
\begin{cor}\label{845}
Let $  L$ be an $n$-dimensional nilpotent Lie algebra with the derived subalgebra of dimension $m\geq 2.$ Then $\dim \mathcal{M}^{(2)}(L)\leq \frac{1}{3} n
(n-2)(n-1)-2.$
\end{cor}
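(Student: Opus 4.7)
The plan is to split into two cases according to the value of $m$. When $m\geq 3$, the desired inequality is precisely the content of Lemma \ref{2}, so nothing needs to be done. The whole argument therefore reduces to handling $m=2$.

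For $m=2$, Lemma \ref{3} only supplies the weaker bound $\dim \mathcal{M}^{(2)}(L)\leq \frac{1}{3}n(n-2)(n-1)+1$, i.e.\ $s_2(L)\geq -1$; combined with the general bound $s_2(L)\geq 0$, I would close the remaining gap of $3$ by excluding the five small values $s_2(L)\in\{0,1,2,3,4\}$ one by one. Each of these has already been handled in Theorem \ref{kl89}: part (i) shows that $s_2(L)=0$ forces $L\cong H(1)\oplus A(n-3)$, whose derived subalgebra has dimension $1$, contradicting $m=2$; parts (ii), (iii), (v) exclude $s_2(L)\in\{1,2,4\}$ for every $m\geq 1$, which in particular covers $m=2$; and part (iv) excludes $s_2(L)=3$ under the hypothesis $m\geq 2$, which is exactly the case at hand. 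Consequently $s_2(L)\geq 5$, which is equivalent to $\dim \mathcal{M}^{(2)}(L)\leq \frac{1}{3}n(n-2)(n-1)-2$.

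There is no substantial obstacle: all the real work has already been done in Lemmas \ref{2}, \ref{3} and Theorem \ref{kl89}, and the corollary is essentially a bookkeeping step that packages those non-existence statements into a single uniform inequality. The only subtle point worth double-checking in the write-up is that part (iv) of Theorem \ref{kl89} genuinely covers the case $m=2$ (it does, since its hypothesis is $m\geq 2$), and that part (i) produces a legitimate contradiction because it pins the derived dimension down to exactly $1$ rather than $2$.
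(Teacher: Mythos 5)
Your proposal is correct and follows essentially the same route as the paper, whose proof consists of citing Theorem \ref{kl89}: excluding $s_2(L)\in\{0,1,2,3,4\}$ for $m\geq 2$ forces $s_2(L)\geq 5$, which is the stated bound (your invocation of Lemma \ref{2} for $m\geq 3$ is a harmless shortcut). One trivial slip: Lemma \ref{3} gives $s_2(L)\geq 2$ for $m=2$, not $s_2(L)\geq -1$, but this does not affect your argument since you discard that bound anyway.
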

\begin{proof}
The result follows  from Theorem \ref{kl89}.
\end{proof}
\begin{thm}\label{5191} 
There is no  $n$-dimensional nilpotent Lie algebra $ L $ with the derived subalgebra of dimension $m\geq 3$   such that
$\dim \mathcal{M}^{(2)}(L)=\frac{1}{3} n
(n-2)(n-1)-2$ or $\dim \mathcal{M}^{(2)}(L)=\frac{1}{3} n
(n-2)(n-1)-3.$ 
\end{thm}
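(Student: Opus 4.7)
The approach is to sharpen the inequality from Theorem \ref{1} rather than use the rounded-off version in Lemma \ref{2}; the needed information is already inside that Lemma's proof, it just has to be tracked carefully. First I would record the trivial dimensional input: since $L$ is nilpotent and non-abelian, $L/L^{2}$ cannot be one-dimensional (otherwise $L$ is generated by a single element, hence abelian), so $\dim L/L^{2}\ge 2$ and therefore $n\ge m+2\ge 5$.

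Next, I would verify that the function
\[
g(m)=\tfrac{1}{3}(n-m)\bigl((n+2m-2)(n-m-1)+3(m-1)\bigr)+3
\]
is strictly decreasing in $m$ on the range $3\le m\le n-2$; a short derivative computation gives $g'(m)=\tfrac{1}{3}\bigl(6(m-1)(m-n)+1\bigr)$, which is negative whenever $m\ge 3$ and $n\ge m+2$. This is exactly the step that is used tacitly inside the proof of Lemma \ref{2}, so by Theorem \ref{1}, for every $L$ with $m\ge 3$,
\[
\dim\mathcal{M}^{(2)}(L)\le g(3)=\tfrac{1}{3}(n-3)(n^{2}-10)+3.
\]

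The key arithmetic, which Lemma \ref{2} threw away, is the identity
\[
\tfrac{1}{3}n(n-1)(n-2)+3-\Bigl(\tfrac{1}{3}(n-3)(n^{2}-10)+3\Bigr)=4n-10,
\]
so $s_{2}(L)\ge 4n-10$. Because $n\ge 5$, this already gives $s_{2}(L)\ge 10>6$, which rules out both $\dim\mathcal{M}^{(2)}(L)=\tfrac{1}{3}n(n-2)(n-1)-2$ (i.e.\ $s_{2}(L)=5$) and $\dim\mathcal{M}^{(2)}(L)=\tfrac{1}{3}n(n-2)(n-1)-3$ (i.e.\ $s_{2}(L)=6$). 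There is no real obstacle here and no case analysis is needed; the whole theorem comes down to noticing that the convenient but lossy bound of Lemma \ref{2} hides a linear-in-$n$ slack of size $4n-10$, which is already much larger than $6$ for every $n\ge 5$.
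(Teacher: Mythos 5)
Your proof is correct, but it takes a genuinely different route from the paper's. The paper argues by contradiction via the central-quotient technique it uses throughout: it picks a one-dimensional central ideal $B\subseteq L^2$, observes that $L/B$ still has derived subalgebra of dimension at least $2$ so that Corollary \ref{845} bounds $\dim\mathcal{M}^{(2)}(L/B)$ by $\frac{1}{3}(n-1)(n-2)(n-3)-2$, and then applies the inequality $\dim\mathcal{M}^{(2)}(L)\le\dim\mathcal{M}^{(2)}(L/B)+\dim(L/L^2\otimes L/L^2\otimes B)$ from \cite[Theorem 2.4]{ara}; comparing the two cubics forces $n\le 3$, contradicting $n\ge m+2\ge 5$. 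You instead stay entirely inside Theorem \ref{1}: your monotonicity check for $g(m)$ on $3\le m\le n-2$ is sound (indeed $(m-1)(m-n)\le -4$ there, so $g'(m)\le\frac{1}{3}(-23)<0$), your value $g(3)=\frac{1}{3}(n-3)(n^2-10)+3$ and the gap $\frac{1}{3}n(n-1)(n-2)+3-g(3)=4n-10$ are both arithmetically correct, and the generator-count argument giving $\dim L/L^2\ge 2$, hence $n\ge 5$, is standard; so $s_2(L)\ge 10>6$ and both cases are excluded. What your route buys: it is self-contained (no appeal to \cite{ara} or to Corollary \ref{845}, which itself rests on the accumulated case analysis of Theorem \ref{kl89}), and it proves strictly more, namely that $s_2(L)\ge 4n-10\ge 10$ whenever $m\ge 3$, ruling out every value $s_2(L)\le 9$ at once rather than only $5$ and $6$. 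What the paper's route buys is uniformity: the same quotient-by-$B$ argument is the engine of all the $m=2$ results (Lemma \ref{5}, Theorems \ref{51}, \ref{519f} and \ref{519}), so reusing it here keeps the whole section on one template.
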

\begin{proof}
By contrary, let there be  an $n$-dimensional nilpotent Lie algebra $ L $ with the derived subalgebra of dimension $m\geq 3$ such that
$\dim \mathcal{M}^{(2)}(L) =\frac{1}{3} n
(n-2)(n-1)-2.$ Let $ B $ be a one dimensional central ideal of $L$ is contained in $L^2.$ Since $ \dim (L/B)^2\geq 2, $ we have $ \dim \mathcal{M}^{(2)}(L/B)\leq  \frac{1}{3}(n-1)(n-2)(n-3)-2,$ by using Corollary \ref{845}. Now \cite[Theorem 2.4]{ara} implies 
\begin{align*}
&\frac{1}{3}(n-2)(n^2-n)-2=\frac{1}{3}n(n-1)(n-2)-2=\dim \mathcal{M}^{(2)}(L)\leq \dim \mathcal{M}^{(2)}(L) +\\&\dim L^3\cap B\leq \dim \mathcal{M}^{(2)}(L/B)+\dim (L/L^2\otimes L/L^2 \otimes B)\leq  \\&\frac{1}{3}(n-1)(n-2)(n-3)-2+(n-3)^2,
\end{align*}
and so $ n\leq 3,$ which is a contradiction. By a similar way, we can see that there is no  $n$-dimensional nilpotent Lie algebra $ L $ with the derived subalgebra of dimension $m\geq 3$   such that
 $\dim \mathcal{M}^{(2)}(L)=\frac{1}{3} n
(n-2)(n-1)-3.$ The result follows.
\end{proof}
\begin{thm}\label{519} 
Let  $ L $ be an  $n$-dimensional nilpotent Lie algebra with the derived subalgebra of dimension $2.$  Then
\begin{itemize}
\item[$(i)$]$\dim \mathcal{M}^{(2)}(L)=\frac{1}{3} n
(n-2)(n-1)-2$ or equally $s_2(L)=5$ if and only if $ L\cong L_{5,8}$ or $ L\cong L_{4,3}.$ 
\item[$(ii)$]$\dim \mathcal{M}^{(2)}(L)=\frac{1}{3} n
(n-2)(n-1)-3$ or equally $s_2(L)=6$ if and only if  $ L\cong L_{5,5}.$ 
\end{itemize}
\end{thm}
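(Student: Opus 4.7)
My proof would follow the template already established in Theorems \ref{51} and \ref{519f}: exploit the exact sequence estimate from \cite[Theorem 2.4]{ara} combined with Theorem \ref{4} applied to a suitable quotient, use it to force $n$ (and the nilpotency class) into a small finite range, and then resolve the remaining finitely many candidates against the classification lists of \cite{cic,Gr} using the exact computations from Proposition \ref{m2}.

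More precisely, in both parts (i) and (ii), let $B$ be a one-dimensional central ideal of $L$ with $B\subseteq L^2$. Then $\dim (L/B)^2=1$, so by Theorem \ref{4} we have
$\dim \mathcal{M}^{(2)}(L/B)\leq \tfrac{1}{3}(n-1)(n-2)(n-3)+3$, and \cite[Theorem 2.4]{ara} gives
\begin{align*}
\dim \mathcal{M}^{(2)}(L)+\dim(L^3\cap B)&\leq \dim \mathcal{M}^{(2)}(L/B)+\dim\bigl(L/L^2\otimes L/L^2\otimes B\bigr)\\
&\leq \tfrac{1}{3}(n-1)(n-2)(n-3)+3+(n-2)^2.
\end{align*}
Substituting the hypothesis $\dim \mathcal{M}^{(2)}(L)=\tfrac{1}{3}n(n-1)(n-2)-k$ (with $k=2$ in (i) and $k=3$ in (ii)) and simplifying using $n(n-1)(n-2)-(n-1)(n-2)(n-3)=3(n-1)(n-2)$, I obtain the inequality $n-2\leq (k+3)-\dim(L^3\cap B)$. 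If $cl(L)=2$ then $L^3=0$, and if $cl(L)=3$ then $B=L^2\cap Z(L)=L^3\cong A(1)$, so $\dim(L^3\cap B)=1$. In case (i) this forces $n\leq 7$ when $cl(L)=2$ and $n\leq 6$ when $cl(L)=3$; in case (ii) it forces $n\leq 8$ and $n\leq 7$ respectively.

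Having reduced to finitely many candidates, I would scan the classification in \cite{cic,Gr} for nilpotent Lie algebras $L$ with $\dim L^2=2$ in these dimension ranges. The algebras $L_{4,3}$, $L_{5,8}$, and $L_{5,5}$ satisfy the required $s_2$-values by the direct computation in Proposition \ref{m2}, giving the "if" direction. For the converse, I must eliminate every other candidate. The typical candidates to rule out are decomposables (e.g., $L_{4,3}\oplus A(i)$, $L_{5,8}\oplus A(i)$, $L_{5,5}\oplus A(i)$) and the $6$- and $7$-dimensional class-$2$ algebras $L_{6,22}(\epsilon)$, $L_{6,7}^{(2)}(\eta)$, etc. For each, I either quote $\dim \mathcal{M}^{(2)}$ from \cite[Theorem 2.5]{ni20} (for decomposable cases) or bound $\dim \mathcal{M}^{(2)}$ by reapplying \cite[Theorem 2.4]{ara} to a further quotient whose $2$-nilpotent multiplier is already known (exactly as was done for $L_{6,22}(\epsilon)$ and $L_{6,7}^{(2)}(\eta)$ in the proof of Theorem \ref{519f}), and check that the value disagrees with the target $s_2=5$ or $s_2=6$.

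The main obstacle is not conceptual but bookkeeping: the case (ii) range goes up to $n=8$ in class $2$ and $n=7$ in class $3$, which is noticeably larger than the ranges encountered in Theorems \ref{51} and \ref{519f}, so the classification list of candidates is longer. The step that requires the most care will be verifying that each class-$2$ algebra of dimension $7$ or $8$ (resp.\ class-$3$ algebra of dimension $6$ or $7$) with $\dim L^2=2$ has $\dim \mathcal{M}^{(2)}(L)$ strictly smaller than the target; for most of them the bound obtained by quotienting by a central line contained in $L^2$ already yields $\dim \mathcal{M}^{(2)}(L)\leq \tfrac{1}{3}n(n-1)(n-2)+3-s$ with $s>6$, so they are eliminated immediately, and only a handful of borderline cases will need explicit computation as in Proposition \ref{m2}.
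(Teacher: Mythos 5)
Your proposal follows essentially the same route as the paper's own proof: quotient by a one-dimensional central ideal $B\subseteq L^2$, combine Theorem \ref{4} with \cite[Theorem 2.4]{ara} to get $n-2\leq (k+3)-\dim(L^3\cap B)$, split on nilpotency class to bound $n$, and then eliminate the remaining candidates from \cite{cic,Gr} via Proposition \ref{m2}, \cite[Theorem 2.5]{ni20}, and further quotient bounds. In fact you are more explicit than the paper for part (ii), where the authors only write ``by a similar technique'' while you correctly note the larger ranges ($n\leq 8$ in class $2$, $n\leq 7$ in class $3$) and the extra bookkeeping they entail.
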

 \begin{proof}
\begin{itemize}
\item[$(i)$]
Let there be  an $n$-dimensional nilpotent Lie algebra $ L $ with the derived subalgebra of dimension $2$ such that
$\dim \mathcal{M}^{(2)}(L) =\frac{1}{3} n
(n-2)(n-1)-2$ and $ B $ be a one dimensional central ideal of $L$ in contained $L^2.$ Since $ \dim (L/B)^2=1, $ we have $ \dim \mathcal{M}^{(2)}(L/B)\leq  \frac{1}{3}(n-1)(n-2)(n-3)+3$ by using Theorem \ref{4}. Now \cite[Theorem 2.4]{ara} implies 
\begin{align*}
&\frac{1}{3}(n-2)(n^2-n)-2=\frac{1}{3}n(n-1)(n-2)-2=\dim \mathcal{M}^{(2)}(L)\leq \\& \dim \mathcal{M}^{(2)}(L/B)+\dim (L/L^2\otimes L/L^2 \otimes B)-\dim L^3\cap B\leq  \\&\frac{1}{3}(n-1)(n-2)(n-3)+3+(n-2)^2-\dim L^3\cap B\\&=\frac{1}{3}(n-2)(n^2-n-3)+3-\dim L^3\cap B.
\end{align*}
If $ cl(L)=2, $ then $ L^3=0 $ so $ n\leq 7.$ If $ cl(L)=3, $ then since $B= L^2\cap Z(L)=L^3\cong A(1), $  $ n\leq 6.$
Let  $ cl(L)=2. $  Hence, by looking at the classification of all nilpotent Lie algebras listed in \cite{cic,Gr,ni60}, we obtain  $ L\cong L_{5,8}, L\cong L_{5,8}\oplus A(1), L\cong L_{5,8}\oplus A(2),L\cong L_{6,22}(\epsilon),L\cong L_{6,22}(\epsilon)\oplus A(1),$  $ L\cong L_{6,7}^{(2)}(\eta),$ $L\cong L_{6,7}^{(2)}(\eta)\oplus A(1),~L\cong L_1$ or $ L\cong L_2.$
By Proposition \ref{m2} and \cite[Theorem 2.5]{ni20}, $\dim( \mathcal{M}^{(2)}(L_{5,8} ))=  18$  and so  $\dim( \mathcal{M}^{(2)}(L_{5,8} \oplus A(1)))=30$ and $\dim( \mathcal{M}^{(2)}(L_{5,8} \oplus A(2)))=50.$ It contradicts our assumption  that $ s_2(L)=5.$ Now, let $L\cong L_{6,22}(\epsilon)$ and  
$ B $ be a one dimensional central ideal of $L_{6,22}(\epsilon)$ is contained in $L_{6,22}(\epsilon)^2.$ Since $ \dim (L_{6,22}(\epsilon)/B)^2=1 $ and $ L_{6,22}(\epsilon)/B\cong H(2), $ we have $ \dim \mathcal{M}^{(2)}(H(2))=  20,$ by using Theorem \ref{4}. Now \cite[Theorem 2.4]{ara} implies 
$\dim \mathcal{M}^{(2)}(L_{6,22}(\epsilon))\leq \dim \mathcal{M}^{(2)}(H(2))+\dim (H(2)/H(2)^2\otimes H(2)/H(2)^2 \otimes B)= 20+16=36
$ and hence 
$\dim \mathcal{M}^{(2)}(L_{6,22}(\epsilon)\oplus A(1)) \leq 66. $ Similarly, we have  
$\dim \mathcal{M}^{(2)}(L_{6,7}^{(2)}(\eta))\leq 36 $ and hence 
$\dim \mathcal{M}^{(2)}(L_{6,7}^{(2)}(\eta)\oplus A(1)) \leq 66.$ They cannot happen because of our assumption that $ s_2(L)=5$.
Also, if  $ L\cong L_1 $ or $ L\cong L_2, $ then let
$ B $ be a one dimensional central ideal of $L$ is contained in $L^2.$ Since $ \dim (L/B)^2=1 $ and $ L/B\cong H(2)\oplus A(1), $ we have $ \dim \mathcal{M}^{(2)}(H(2)\oplus A(1))=  40,$ by using Theorem \ref{4}. Now \cite[Theorem 2.4]{ara} implies 
$\dim \mathcal{M}^{(2)}(L)\leq \dim \mathcal{M}^{(2)}(H(2)\oplus A(1))+\dim (L/L^2\otimes L/L^2 \otimes B)= 40+25=65,$
which  contradicts  our assumption that $ s_2(L)=5.$ Hence we should have $ L\cong L_{5,8}.$
In the case that $ cl(L)=3. $  Hence, by looking  the classification of all nilpotent Lie algebras of dimension $ 4 $ listed in \cite{Gr}, we obtain  $ L\cong L_{4,3}.$ Proposition \ref{m2} implies $\dim( \mathcal{M}^{(2)}(L_{4,3} ))=6$ so $ S_2(L_{4,3})=5. $ By a similar way, there is no a Lie algebra such that $s_2(L)=5$ when $ \dim L\geq 5.$
 \item[$(ii)$]By a similar technique is used in  the proof of part $(i)$,
 we conclude that  $L\cong L_{5,5}.$ The converse holds by Proposition \ref{m2}.
\end{itemize}
 \end{proof}
 \begin{thm}\label{man}
 Let  $ L $ be an  $n$-dimensional nilpotent Lie algebra with the derived subalgebra of dimension $m\geq 1.$ Then
 \begin{itemize}
 \item[$(a)$] $s_2(L)=0  $ if and only if $ L\cong H(1)\oplus A(n-3).$ 
 \item[$(b)$] There is no  $n$-dimensional nilpotent Lie algebra $ L $  such that $ s_2(L)=1,2,4.$ 
 \item[$(c)$] $s_2(L)=3  $ if and only if $ L\cong H(k)\oplus A(n-2k-1) $ for all $ k\geq 2.$
  \item[$(d)$] $s_2(L)=5  $ if and only if $ L\cong L_{4,3} $ or $ L\cong L_{5,8}$
  \item[$(e)$] $s_2(L)=6  $ if and only if   $ L\cong L_{5,5}.$
 \end{itemize}
 \end{thm}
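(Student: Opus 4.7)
The plan is to treat Theorem~\ref{man} as an assembly of everything proved earlier, with the unifying strategy being a case split on the dimension $m$ of the derived subalgebra. The upper bound in Lemma~\ref{2} ($\dim \mathcal{M}^{(2)}(L) \le \frac{1}{3}n(n-1)(n-2)-2$ when $m\ge 3$) immediately restricts $s_2(L)\le 4$ to the range $m\in\{1,2\}$, while Theorem~\ref{5191} extends this restriction to $s_2(L)\in\{5,6\}$. So in every one of the five parts the case $m\ge 3$ can be dispatched up front, and the work becomes identifying what happens for $m=1$ (handled in full by Theorem~\ref{4}) and for $m=2$ (handled by the chain Lemma~\ref{3}, Lemma~\ref{5}, Theorem~\ref{51}, Theorem~\ref{519f}, and Theorem~\ref{519}).

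For part (a), $s_2(L)=0$: Lemma~\ref{2} forces $m\le 2$, and Lemma~\ref{3} rules out $m=2$, so $m=1$ and Theorem~\ref{4}(i) gives $L\cong H(1)\oplus A(n-3)$; the converse is in Theorem~\ref{4}(i). For (b), the non-existence of $s_2(L)\in\{1,2,4\}$: in each instance Lemma~\ref{2} forces $m\le 2$; when $m=1$ Theorem~\ref{4} yields only $s_2(L)\in\{0,3\}$; and when $m=2$ the values $1,2,4$ are excluded by Corollary~\ref{25}, Lemma~\ref{5}, and Theorem~\ref{519f} respectively. For (c), $s_2(L)=3$: Lemma~\ref{2} forces $m\le 2$ and Theorem~\ref{51} excludes $m=2$, so $m=1$ and Theorem~\ref{4}(ii) characterizes $L$ as $H(k)\oplus A(n-2k-1)$ for $k\ge 2$, the converse again coming from Theorem~\ref{4}(ii).

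For parts (d) and (e), Theorem~\ref{5191} excludes $m\ge 3$ and Theorem~\ref{4} excludes $m=1$ (since that case only yields $s_2\in\{0,3\}$), leaving $m=2$. Then Theorem~\ref{519}(i) identifies the $s_2(L)=5$ algebras as $L_{4,3}$ or $L_{5,8}$, and Theorem~\ref{519}(ii) identifies the $s_2(L)=6$ algebra as $L_{5,5}$. The converse directions in (d) and (e) are confirmed by the explicit calculations of $\dim \mathcal{M}^{(2)}(L_{4,3})=6$, $\dim \mathcal{M}^{(2)}(L_{5,8})=18$, and $\dim \mathcal{M}^{(2)}(L_{5,5})=17$ established in Proposition~\ref{m2}, matching the respective values $\tfrac{1}{3}n(n-2)(n-1)-2$ and $\tfrac{1}{3}n(n-2)(n-1)-3$ for $n=4,5$.

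There is no real obstacle here beyond bookkeeping: the substantive content has already been extracted in the earlier sections, and the theorem merely packages the cases $s_2(L)\in\{0,1,\ldots,6\}$ into a single statement. The one thing to be careful about is that, in each sub-part, the exclusion of $m\ge 3$ relies on a different result (Lemma~\ref{2} for $s_2\le 4$, Theorem~\ref{5191} for $s_2\in\{5,6\}$), so the proof should state this branching explicitly rather than invoking a single blanket bound.
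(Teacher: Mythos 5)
Your proposal is correct and follows essentially the same route as the paper: the paper's proof is precisely an assembly of Theorem \ref{4} (the $m=1$ case), Corollary \ref{845} (which packages the bounds and non-existence results you cite individually, showing $m\geq 2$ forces $s_2(L)\geq 5$), Theorem \ref{5191} (excluding $m\geq 3$ for $s_2\in\{5,6\}$), and Theorem \ref{519} (the $m=2$ classification). Your version merely unpacks Corollary \ref{845} into its constituent lemmas, which is a harmless difference of granularity.
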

 \begin{proof}
 The result is obtained by using Theorem \ref{4}, Corollary \ref{845},  Theorems \ref{5191} and \ref{519}.
 \end{proof}
Recall from \cite{ni20}, a Lie algebra $L$ is said to be $2$-capable if $L\cong H/Z_2(H)$ for a Lie algebra $H$.
In the following corollary, we speciy which ones of Lie algebras with $0\leq s_2(L)\leq 6 $ are capable.
 \begin{cor}
 Let  $ L $ be an  $n$-dimensional nilpotent Lie algebra with the derived subalgebra of dimension $m\geq 1$ such that $0\leq s_2(L)\leq 5.  $ Then $ L $ is $2$-capable if and only if $ L\cong H(1)\oplus A(n-3),$ $  L\cong L_{4,3}, $  $ L\cong L_{5,5} $ or $ L\cong L_{5,8}.$
 \end{cor}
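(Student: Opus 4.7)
The approach is to reduce the question to a finite check using the classification established in Theorem \ref{man}. Taking the hypothesis to cover the full range $0 \leq s_2(L) \leq 6$ (so that $L_{5,5}$ from the conclusion is admissible), Theorem \ref{man} gives exactly five families: $H(1)\oplus A(n-3)$ ($s_2=0$), $H(k)\oplus A(n-2k-1)$ with $k\geq 2$ ($s_2=3$), $L_{4,3}$ and $L_{5,8}$ ($s_2=5$), and $L_{5,5}$ ($s_2=6$). The corollary is then the assertion that the intermediate Heisenberg-type family with $k\geq 2$ is the \emph{only} non-$2$-capable one in this list.

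The diagnostic tool I would use is the $2$-epicenter $Z_2^*(L)$, with the criterion that $L$ is $2$-capable if and only if $Z_2^*(L)=0$. Membership of a one-dimensional central ideal $B\subseteq L^2$ in $Z_2^*(L)$ can be detected via the comparison inequality of \cite[Theorem 2.4]{ara}: $B\subseteq Z_2^*(L)$ iff the bound relating $\dim \mathcal{M}^{(2)}(L)$ to $\dim \mathcal{M}^{(2)}(L/B)$ is saturated (i.e., forces $\dim \mathcal{M}^{(2)}(L)=\dim \mathcal{M}^{(2)}(L/B)-\dim B$ in the spirit of the identity exploited in the proof of Proposition \ref{68}).

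For the non-$2$-capable direction, I would take $L=H(k)\oplus A(n-2k-1)$ with $k\geq 2$, set $B=Z(H(k))$ (so $L/B$ is abelian of dimension $n-1$), and combine Theorem \ref{4}(ii), Theorem \ref{13} and \cite[Lemma 2.2 and Theorem 3.2]{ni20} to verify $\dim \mathcal{M}^{(2)}(L)=\dim \mathcal{M}^{(2)}(L/B)-1$; this forces $B\subseteq Z_2^*(L)$ and rules out $2$-capability. For the $2$-capable direction, for $H(1)\oplus A(n-3)$ the saturation in Theorem \ref{4}(i) already gives $Z_2^*(L)=0$, and for the three sporadic algebras $L_{4,3}$, $L_{5,8}$, $L_{5,5}$ I would input the explicit values $\dim \mathcal{M}^{(2)}=6,18,17$ from Proposition \ref{m2} and test the \cite[Theorem 2.4]{ara} inequality against every one-dimensional central ideal, checking strictness in each case.

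The main obstacle is the two class-$3$ algebras $L_{4,3}$ and $L_{5,5}$: here $L^3$ is itself a one-dimensional central ideal inside $L^2$ and is the natural candidate to be a non-trivial element of $Z_2^*(L)$. Ruling it out requires comparing $\dim \mathcal{M}^{(2)}(L)$ with $\dim \mathcal{M}^{(2)}(L/L^3)$ (computed from Theorem \ref{4} applied to $L/L^3\cong H(1)\oplus A(n-4)$) plus the correction $\dim(L/L^2\otimes L/L^2\otimes L^3)$, and verifying via Proposition \ref{m2} that the resulting inequality is strict; this is precisely the contrapositive of the identity used in Proposition \ref{68}.
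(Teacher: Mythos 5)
Your overall strategy---reduce to the finite list from Theorem \ref{man} and then test each algebra by comparing $\dim\mathcal{M}^{(2)}(L)$ with $\dim\mathcal{M}^{(2)}(L/B)$ for one-dimensional central ideals $B\subseteq L^2$---is the same as the paper's, and your observation that the stated range $0\leq s_2(L)\leq 5$ sits awkwardly with $L_{5,5}$ (which has $s_2=6$) is fair. For the $2$-capable direction your argument is essentially correct and matches the paper: for $L_{4,3}$, $L_{5,8}$, $L_{5,5}$ one checks via Theorem \ref{4} and Proposition \ref{m2} that $\dim\mathcal{M}^{(2)}(L/B)<\dim\mathcal{M}^{(2)}(L)$ (the paper does not even need the correction terms), so the induced map $\mathcal{M}^{(2)}(L)\to\mathcal{M}^{(2)}(L/B)$ cannot be injective, whence $B\not\subseteq Z_2^{*}(L)$ by \cite[Theorem 3.2]{ni20}; your class-$3$ worry about $L^3$ is handled by exactly this computation.

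The genuine gap is in your detection criterion and in the non-$2$-capable direction. The statement ``$B\subseteq Z_2^{*}(L)$ iff the bound of \cite[Theorem 2.4]{ara} is saturated'' is not a valid equivalence: the correct criterion is injectivity of $\mathcal{M}^{(2)}(L)\to\mathcal{M}^{(2)}(L/B)$, of which the dimension identity $\dim\mathcal{M}^{(2)}(L/B)=\dim\mathcal{M}^{(2)}(L)+\dim(B\cap L^3)$ is only a \emph{necessary} consequence. Its failure rules out $B\subseteq Z_2^{*}(L)$ (which is why the capability half works), but its validity proves nothing, so your plan for $L=H(k)\oplus A(n-2k-1)$, $k\geq 2$, cannot establish non-$2$-capability. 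Indeed, with $B=Z(H(k))$ one has $L/B\cong A(n-1)$, $\dim\mathcal{M}^{(2)}(L/B)=l_{n-1}(3)=\frac{1}{3}n(n-1)(n-2)=\dim\mathcal{M}^{(2)}(L)$ and $B\cap L^3=0$, so the identity you would verify is an equality with correction $0$, not $-1$ as you claim, and even the corrected equality is consistent with both capability and non-capability. The paper avoids this entirely by quoting \cite[Theorem 3.3]{ni20}, which already decides $2$-capability for the Heisenberg family ($2$-capable iff $k=1$); without that input, or a direct construction showing $Z(H(k))\subseteq Z_2^{*}(L)$ for $k\geq 2$, your argument leaves the ``only if'' half of the corollary unproved.
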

 \begin{proof}
 By using Theorem \ref{man},  $L$ is isomorphic to one of the Lie algebras $ H(k)\oplus A(n-3),$ for all $k\geq 1, $ $  L_{4,3}, $ $  L_{5,5} $ or $L_{5,8}.$
By invoking \cite[Theorem 3.3]{ni20}, $  H(1)\oplus A(n-3)$ is   $2$-capable. Let $ L\cong L_{4,3}$ and $ B $ be a one dimensional central ideal of $L$ is contained in $L^2.$ Since $ \dim (L/B)^2=1, $ we have $ \dim \mathcal{M}^{(2)}(L/B)\leq 3,$ by using Theorem \ref{4}. Since $\dim \mathcal{M}^{(2)}(L/B) < \dim \mathcal{M}^{(2)}(L)=5,$ \cite[Theorem 3.2]{ni20} implies $ L_{4,3} $ is $2$-capable. By a similar way,  $ L\cong L_{5,5}$ and $ L\cong L_{5,8}$ are $2$-capable. Hence the result follows.
 \end{proof}

\end{document}